\documentclass[12pt,reqno]{amsart}
\usepackage{amssymb}
\usepackage{amsmath}
\usepackage{amscd,amsxtra,amsthm}
\usepackage[all]{xy}
\usepackage{etex}
\usepackage{pictex}
\usepackage{graphicx}
\usepackage{mathtools}
\usepackage{float}
\usepackage{xcolor}
\usepackage[utf8]{inputenc}
\usepackage[T1]{fontenc}
\usepackage{subcaption}
\usepackage{tikz}
\usepackage{tkz-tab}
\usepackage{caption}

\usepackage{hyperref}
\hypersetup{colorlinks, citecolor=blue}

\usepackage{mathtools}
\usepackage{multirow}
\DeclarePairedDelimiter{\ceil}{\lceil}{\rceil}

\usepackage{comment}

\newtheorem{lemma}{Lemma}[section]

\newtheorem{theorem}[lemma]{Theorem}

\theoremstyle{conjecture}

\theoremstyle{definition}

\theoremstyle{remark}

\theoremstyle{observation}

\numberwithin{equation}{section}

\begin{document}

\title[3-Color 2-Chromatic Gallai-Ramsey Numbers]{The 3-Color 2-Chromatic Gallai-Ramsey Number for Complete Graphs}

\author{Mark Budden}
\address{Department and Mathematics and Computer Science, Western Carolina University, Cullowhee, NC, USA.}
\email{mrbudden@email.wcu.edu}

\author{Elijah DeJonge}
\address{Department and Mathematics and Computer Science, Western Carolina University, Cullowhee, NC, USA.}
\email{dejongee@email.wcu.edu}


\subjclass[2020]{Primary: {05C55, 05C15}; Secondary: {05D10}}
\keywords{Gallai coloring, chromatic Ramsey number, weakened Ramsey number}

\begin{abstract}
For $1\le i\le j\le k$, let $gr^3_2(K_i, K_j, K_k)$ be the least $n\in \mathbb{N}$ such that every rainbow triangle-free $3$-coloring of $K_n$ (using red, blue, and green) contains a red/blue $K_i$, a red/green $K_j$, or a blue/green $K_k$.  In this paper, we completely evaluate $gr^3_2(K_i, K_j, K_k)$ for all values of $i$, $j$, and $k$ in which $r(K_i,K_j)$ is known.
\end{abstract}

\maketitle

\section{Introduction}\label{intro}

In 1977, K. Chung, M. Chung, and C. Liu \cite{CCL} proposed an interesting variation of multicolor Ramsey numbers.  Rather than finding the threshold at which a monochromatic subgraph exists within a $t$-colored complete graph, they sought the threshold at which a subgraph exists whose edges use at most $s$ colors, where $1\le s<t$.  The chromatic Ramsey numbers they defined were some of the first ``weakened'' Ramsey numbers to be studied (cf. \cite{B2, HM}).  In this paper, we consider chromatic Ramsey numbers for Gallai colorings (those that lack rainbow triangles).  Before describing our results, we first review the necessary definitions and background.

A {\it $t$-coloring} of a graph $G$ is a map $f:E(G)\longrightarrow \{1, 2, \dots , t\}$.  Such maps are not assumed to be surjective (i.e., a $t$-coloring may use fewer than $t$ colors).   If a $t$-coloring is surjective, then it is called an {\it exact $t$-coloring}.   If $t\ge 2$ and $G_1, G_2, \dots , G_t$ are graphs, then the {\it $t$-color Ramsey number} $r(G_1, G_2, \dots , G_t)$ is the least $n\in \mathbb{N}$ such that every $t$-coloring of the edges of $K_n$ results in a subgraph isomorphic to $G_i$ in color $i$, for some $i\in \{ 1, 2, \dots , t\}$.  A {\it critical coloring} for $r(G_1, G_2, \dots, G_t)$ is a $t$-coloring of $K_{r(G_1, G_2, \dots , G_t)-1}$ that avoids a copy of $G_i$ in color $i$, for all $i\in \{1, 2, \dots , t\}$.  A detailed overview of known values of $r(G_1, G_2, \dots , G_t)$ can be found in Radziszowski's dynamic survey \cite{Rad}.

Let $S_i:=\{1, 2, \dots , t\}\setminus \{t+1-i\}$.  Then the {\it $t$-color $(t-1)$-chromatic Ramsey number} $r^t_{t-1}(G_1, G_2, \dots , G_t)$ is defined to be the least $n\in \mathbb{N}$ such that every $t$-coloring of $K_n$ contains a copy of $G_i$ spanned by edges whose colors are in the set $S_i$, for some $i\in \{1, 2, \dots , t\}$.  A {\it critical coloring} for $r^t_{t-1}(G_1, G_2, \dots , G_t)$ is a $t$-coloring of $K_{r^t_{t-1}(G_1, G_2, \dots , G_t)-1}$ that lacks a $G_i$ whose edges use only the colors in $S_i$, for every $i\in \{1, 2, \dots , t\}$.  A more general class of chromatic Ramsey numbers than those defined here have also been studied in \cite{CL, J1, J2, LC, YL}.  In \cite{CL}, the following $3$-color $2$-chromatic Ramsey numbers were determined:
$$r^3_2(K_3, K_3, K_k)=\begin{cases} 5 & \mbox{if $3\le k\le 5$} \\ 6 & \mbox{if $k\ge 6$,}\end{cases}$$ 
$$r^3_2(K_3, K_4, K_k)=\begin{cases} 7 & \mbox{if $k=4$} \\ 8 & \mbox{if $5\le k\le 6$} \\ 9 & \mbox{if $k\ge 7$,}\end{cases}$$
$$r^3_2(K_3, K_5, K_k)=\begin{cases} 9 & \mbox{if $k=5$} \\ 11 & \mbox{if $k=6$} \\ 12 & \mbox{if $7\le k\le 8$} \\ 13 & \mbox{if $9\le k\le 13$} \\ 14 & \mbox{if $k\ge 14$,}\end{cases}$$
and $r^3_2(K_4, K_4, K_4)=10$.

A {\it Gallai $t$-coloring} of a graph $G$ is a $t$-coloring of $G$ that avoids rainbow triangles.  That is, every $K_3$-subgraph of $G$ has at most two colors on its edges.
The {\it $t$-color $(t-1)$-chromatic Gallai-Ramsey number} $gr^t_{t-1}(G_1, G_2, \dots , G_t)$ is the least $n\in \mathbb{N}$ such that every Gallai $t$-coloring of $K_n$ contains a copy of $G_i$ spanned by edges whose colors are in the set $S_i$, for some $i\in \{1, 2, \dots , t\}$.
Since every Gallai $t$-coloring of $K_n$ is a $t$-coloring of $K_n$, it follows that $$gr^t_{t-1}(G_1, G_2,\dots ,G_t)\le r^t_{t-1}(G_1, G_2, \dots , G_t).$$  A {\it critical coloring} for $gr^t_{t-1}(G_1, G_2, \dots , G_t)$ is a Gallai $t$-coloring of $K_{gr^t_{t-1}(G_1, G_2, \dots , G_t)-1}$ that avoids a copy of $G_i$ whose edges use only the colors in $S_i$, for all $i\in \{1, 2, \dots , t\}$.

Our focus in this paper will be on the determination of chromatic Gallai-Ramsey numbers of the form $gr^3_2(K_i, K_j, K_k)$, where we assume that $i\le j\le k$, unless it is explicitly stated otherwise.  We will refer to the colors $1$, $2$, and $3$ as red, blue, and green, respectively.
One benefit of working with Gallai $t$-colorings of $K_n$ (as opposed to $t$-colorings of $K_n$) is that the proofs of upper bounds for chromatic Gallai-Ramsey numbers can be broken into manageable cases by using the following structural theorem, which is a reinterpretation of a result of Gallai \cite{Gallai}. 

\begin{theorem}[\cite{GS}]\label{Gallaistruct}
Every Gallai-colored complete graph can be formed by replacing the vertices in a $2$-colored complete graph of order at least $2$ with Gallai-colored complete graphs.
\end{theorem}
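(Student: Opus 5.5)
We would prove Theorem~\ref{Gallaistruct} by strong induction on the order $n$ of the Gallai-colored complete graph $K_n$, working from the classical fact (Gallai's lemma) that in every Gallai-colored $K_n$ with $n\ge 2$ some color class is disconnected as a spanning subgraph. For $n=2$ the statement is immediate: the single edge is a $2$-colored $K_2$ whose two vertices are replaced by trivial $K_1$'s. For $n\ge 3$ the argument has two steps: produce a nontrivial partition of $V(K_n)$ into parts $V_1,\dots,V_m$ ($m\ge 2$) such that all edges between any two parts receive one color, and then show that at most two colors occur among these inter-part colors. Each induced subgraph on $V_i$ is itself a Gallai-colored complete graph, so it can replace the corresponding vertex of the reduced graph.

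The partition comes from the disconnected color. Let red be a color whose spanning subgraph is disconnected, and let $A$ be the vertex set of one red component, with $B=V\setminus A\neq\emptyset$. No edge between $A$ and $B$ is red. Next we show that all $A$--$B$ edges share one color, or else refine the partition so that this holds. Take $a\in A$ and $b,b'\in B$ with $ab$ blue and $ab'$ green. The triangle $abb'$ must then have $bb'$ blue or green. To control this, we would choose the partition more carefully: take the blocks to be the maximal sets that are \emph{modules}, i.e.\ sets $X$ such that every vertex outside $X$ sends edges of a single color to all of $X$. Gallai's theorem asserts exactly that, for $n\ge 2$, the maximal proper modules partition $V$ into at least two blocks. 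Once we have such a partition, the reduced graph is well defined, and we can pass to it by contracting each block.

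It remains to see that the reduced graph uses at most two colors, and this is the main obstacle. Suppose the reduced graph on $m$ blocks uses three colors. The reduced graph inherits the Gallai property, because choosing one representative vertex per block gives an induced copy in $K_n$. Apply Gallai's lemma to the reduced graph to get a disconnected color class. If $m\ge 3$ and three colors appear, a disconnected color class yields a proper union of blocks that is itself a module. For example, a component $C$ of the red graph, with all edges leaving $C$ non-red, either forms a module or forces a rainbow triangle through the argument above. In the first case maximality is contradicted whenever $C$ is a proper union of more than one block. The delicate cases are small configurations, such as $C$ being a single block or its complement being a single block, and these would be handled directly using the triangle condition. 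If instead we cannot force two colors, we use the alternative formulation: group the blocks by components of the disconnected color so that the quotient is genuinely $2$-colored. This is why the theorem is stated for a $2$-colored base graph, and the case analysis needed to reach that regrouped form is where we expect most of the effort to go.
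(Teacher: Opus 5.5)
The paper gives no proof of this statement; it is quoted from Gy\'arf\'as and Simonyi. So your sketch has to stand on its own, and as written it has genuine gaps.

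\textbf{The starting lemma is misstated and assumed.} A color that does not occur is trivially disconnected on $n\ge 2$ vertices. For colors that do occur, your claim is false: take a monochromatic $K_n$, or a $K_4$ whose red and blue classes are both spanning paths. The usable statement is: \emph{if at least three colors are used, some used color class is disconnected}. That is essentially the content of the theorem, since it follows at once from it (every color other than the two base colors lives inside blocks). You assume it rather than prove it.

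\textbf{Maximal proper modules do not partition $V$ in general.} In a monochromatic $K_3$, every pair is a maximal proper module, and these overlap. Gallai's decomposition concerns maximal \emph{strong} modules, and quoting it is close to quoting the result.

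\textbf{The module property of color components is never proved.} The triangle $abb'$ you examine imposes no constraint. The correct argument goes the other way. Suppose $u\notin C$ sends two colors to a red component $C$. Walk along a red path inside $C$ until you reach a red edge $xy$ with $ux$, $uy$ of different non-red colors. That is a rainbow triangle.

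\textbf{The two-color step is left open, and the fallback fails.} Grouping blocks by the components of a disconnected color only removes that color from the quotient; any number of other colors may remain.

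\textbf{One way to close the argument.} Take a partition of $V$ into $m\ge 2$ modules with $m$ minimal; singletons qualify. Two disjoint modules are joined in a single color, so the quotient $R$ is a well-defined Gallai coloring of $K_m$. Moreover, $R$ has no module $S$ with $2\le |S|\le m-1$, since merging the corresponding parts would give fewer parts. If $m=2$ you are done. The case $m=3$ is impossible, because a triangle with no nontrivial module is rainbow; this is the content of Lemma~\ref{not3}. If $m\ge 4$ and $R$ used three colors, the lemma gives a used color that is disconnected in $R$. Its component containing an edge of that color is a module of $R$ with between $2$ and $m-1$ vertices, a contradiction.

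\textbf{The lemma itself yields to induction on $n$.} It is vacuous for $n\le 3$. Suppose all ($\ge 3$) used colors were connected spanning subgraphs.
\begin{enumerate}
\item No color class lies entirely at one vertex $v$; otherwise the other colors would miss $v$. Hence every color survives in $K_n-v$.
\item By induction, some used color, say red, is disconnected in $K_n-v$. Its components $C_1,\dots ,C_p$, $p\ge 2$, are modules by the path argument above.
\item Red is connected in $K_n$, so $v$ has a red edge into every $C_j$.
\item If $vw$ has non-red color $e$ with $w\in C_i$, pick $y\in C_j$ ($j\ne i$) with $vy$ red. The triangle $vwy$ forces $e$ to equal the color between $C_i$ and $C_j$. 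Hence all non-red colors at $v$ coincide.
\item So some third used color misses $v$, a contradiction.
\end{enumerate}
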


In Theorem \ref{Gallaistruct}, the $2$-colored complete graph with order at least $2$ is called the {\it base graph}, and the Gallai-colored complete graphs that replace the vertices in the base graph are called the {\it blocks}.
The following lemma will save us some time when using Theorem \ref{Gallaistruct}.

\begin{lemma}[\cite{MSn}]\label{not3}
If $\mathcal{B}$ is the base graph of a Gallai partition, chosen to have minimal order, then $|V(\mathcal{B})|\ne 3$.
\end{lemma}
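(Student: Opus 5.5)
Suppose, for contradiction, that the Gallai partition of the Gallai-colored $K_n$ has a base graph $\mathcal{B}$ of minimal order with exactly three vertices. The idea is to show that a partition with only two parts must then also exist, which contradicts minimality since two is smaller than three.

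First, recall what it means for $\mathcal{B}$ to be a base graph. The vertex set is partitioned into blocks $A_1, A_2, A_3$. Between any two blocks $A_i$ and $A_j$, all edges receive a single color $c_{ij}$, namely the color of the edge $v_iv_j$ in $\mathcal{B}$. Moreover, $\mathcal{B}$ uses at most two colors. The edges of $\mathcal{B}$ form a triangle $v_1v_2v_3$ colored with at most two colors. Hence some two of its edges share a color. Relabel so that $c_{12}=c_{13}=c$.

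Next, merge the blocks $A_2$ and $A_3$ into a single block $A_2\cup A_3$. The subgraph induced on $A_2\cup A_3$ is still a Gallai-colored complete graph, because it is a subgraph of a Gallai coloring. Every edge between $A_1$ and $A_2\cup A_3$ has color $c$.

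So $\{A_1, A_2\cup A_3\}$ is a Gallai partition whose base graph is $K_2$ colored with the single color $c$. That is a $2$-colored complete graph of order $2$, as allowed by Theorem \ref{Gallaistruct}. This base graph has order $2<3$, contradicting the minimality of $\mathcal{B}$. Therefore $|V(\mathcal{B})|\ne 3$.

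The only point needing care is that an order-$2$ base graph is permitted. Theorem \ref{Gallaistruct} only requires order at least $2$, and a $1$-colored graph counts as $2$-colored because colorings need not be surjective. So there is no real obstacle.
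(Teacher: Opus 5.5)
Your proof is correct: in a $2$-colored triangle two edges share a color, so merging the two blocks whose edges to the third block have that common color gives a Gallai partition with a base graph of order $2$, which contradicts minimality. The paper does not prove this lemma and only cites it, and your merging argument is the standard proof of the fact.
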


\noindent We will also need the following lemma that was proved by Erd\H{o}s, Simonovits, and S\'os \cite{ESS} in 1975.

\begin{lemma}[\cite{ESS}]\label{numcolors}
Every Gallai coloring of $K_n$, where $n\ge 2$, uses at most $n-1$ colors.
\end{lemma}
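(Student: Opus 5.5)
We argue by induction on $n$, using the Gallai partition of Theorem~\ref{Gallaistruct}. For $n=2$ the graph $K_2$ has one edge and so uses exactly one color, which is $n-1$. Now let $n\ge 3$, assume the statement for all complete graphs of order between $2$ and $n-1$, and fix a Gallai coloring of $K_n$. By Theorem~\ref{Gallaistruct} we write it as a $2$-colored base graph $\mathcal{B}$ on $m\ge 2$ vertices, whose vertices are replaced by Gallai-colored blocks $B_1,\dots,B_m$ of orders $n_1,\dots,n_m$. Here $\sum n_\ell = n$ and each $n_\ell\ge 1$.

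Every edge of $K_n$ either lies inside a single block or joins two different blocks. An edge joining two blocks receives the color of the corresponding base-graph edge, so these edges together use at most $2$ colors. Since $\mathcal{B}$ has at least one edge, they use at least one color. Inside block $B_\ell$ there are no colors when $n_\ell=1$. When $n_\ell\ge 2$, the induction hypothesis gives at most $n_\ell-1$ colors. In both cases block $B_\ell$ uses at most $n_\ell-1$ colors.

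Adding these contributions, the total number of colors is at most
$$2+\sum_{\ell=1}^m (n_\ell-1) = n-m+2 .$$
This is at most $n-1$ as soon as $m\ge 3$.

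The main obstacle is the case $m=2$, where the count above only gives $n$. We handle it by sharpening the between-block count. When $m=2$ the base graph is a single edge, so all edges between $B_1$ and $B_2$ receive one common color. The between-block edges therefore contribute at most $1$ color. The total is then at most
$$1+(n_1-1)+(n_2-1)=n-1 .$$

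Alternatively, one could use a different induction that avoids this case split. Choose a vertex $v$ and apply the hypothesis to $K_n-v$. Then show that the edges at $v$ add at most one color not already present, by finding a triangle through $v$ that would otherwise be rainbow. The partition argument above is more direct, so we use it.
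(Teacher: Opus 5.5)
Your proof is correct. Since $m\ge 2$ and every block is nonempty, each block has order at most $n-1$, so the induction hypothesis applies to it. The case split on $m$ is exactly what is needed: for $m=2$ the base graph is a single edge, so the between-block edges carry one color and the count is $1+(n_1-1)+(n_2-1)=n-1$.

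The paper gives no proof of this lemma. It cites Erd\H{o}s, Simonovits, and S\'os \cite{ESS}, so there is no internal argument to match, and the useful comparison is with the alternative you sketch at the end.

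\begin{itemize}
\item Your main route relies on Gallai's structure theorem (Theorem~\ref{Gallaistruct}), which is much deeper than the statement requires. There is no circularity, since Gallai's result predates \cite{ESS} and does not depend on it.
\item Your alternative is fully elementary, and it is the same observation the paper uses to justify Lemma~\ref{increaseorder}. Suppose $vx$ and $vy$ carried two distinct colors that are absent from $K_n-v$. The edge $xy$ has a color that does occur in $K_n-v$, so $vxy$ would be a rainbow triangle. Hence $K_n$ uses at most $(n-2)+1=n-1$ colors for $n\ge 3$.
\end{itemize}

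The elementary route needs only the definition of a Gallai coloring. It also gives the slightly stronger fact that adding vertices one at a time introduces at most one new color per step. The partition route fits the paper's later case analyses, which are driven by Gallai partitions, but it is more machinery than this lemma needs.
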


\noindent  The next lemma can be compared to Lemma 5 of \cite{BB}.  It follows from the observation that the addition of any vertex to a Gallai $s$-colored $K_m$-subgraph introduces at most one new color to the resulting edge set.

\begin{lemma}\label{increaseorder}
If a Gallai $t$-coloring of $K_n$ contains a $K_m$-subgraph whose edges use at most $s<t-1$ colors, where $m<n$, then it contains a $K_{m+1}$-subgraph whose edges use at most $s+1$ colors.
\end{lemma}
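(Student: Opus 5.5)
The plan is to add a single vertex $v$ to the given subgraph and check directly that at most one new color appears. Let $H$ be a $K_m$-subgraph whose edges use a set $C$ of at most $s$ colors, with $m<n$. Since $m<n$, there is a vertex $v$ of $K_n$ outside $H$. Let $H'$ be the $K_{m+1}$-subgraph induced by $V(H)\cup\{v\}$. It then suffices to show that the colors on the edges $vx$, $x\in V(H)$, contribute at most one color not already in $C$. This gives $|C(H')|\le s+1$. The hypothesis $s<t-1$ only ensures that $s+1<t$, so the conclusion is nontrivial and the lemma can be iterated. It is not needed for the core argument.

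The key step is the following claim: if $vx$ and $vy$ are two edges with $x,y\in V(H)$ whose colors $a$ and $b$ both lie outside $C$, then $a=b$. Suppose instead that $a\ne b$. The edge $xy$ belongs to $H$, so its color $c$ lies in $C$, while $a,b\notin C$. Hence $a$, $b$, and $c$ are pairwise distinct, and the triangle $vxy$ is rainbow, contradicting that the coloring is Gallai. Consequently every edge from $v$ to $H$ has color either in $C$ or equal to one fixed color $a\notin C$. Therefore $H'$ uses at most $|C|+1\le s+1$ colors.

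There is essentially no obstacle here. The only care needed is the degenerate case $m=1$, where $H$ has no edges and $C=\emptyset$. The pair argument above then never applies, since $H$ has only one vertex, so $v$ is joined to $H$ by a single edge contributing one color. This again gives at most $s+1$ colors, because $s\ge 0$.
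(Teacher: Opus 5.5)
Your proof is correct and takes the same approach as the paper. The paper justifies the lemma only by the one-line observation that adding a vertex to a Gallai $s$-colored $K_m$-subgraph introduces at most one new color, and your rainbow-triangle argument on $vxy$ (with $xy$ colored from $C$) is exactly the verification of that observation.
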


In Section \ref{main1}, several general results and bounds are decribed that will be needed in the computation of $gr^3_2(K_i, K_j, K_k)$.  We will find that the complete evaluation of $gr^3_2(K_i, K_j, K_k)$ is possible when $r(K_i,K_j)$ is known.    The known nontrivial classical $2$-color Ramsey numbers are $r(K_3,K_3)=6$ \cite{GG}, $r(K_3, K_4)=9$ \cite{GG}, $r(K_3, K_5)=14$ \cite{GG}, $r(K_3, K_6)=18$ \cite{K}, $r(K_3, K_7)=23$ \cite{GY, Kalb}, $r(K_3, K_8)=28$ \cite{GR, MZ}, $r(K_3, K_9)=36$ \cite{GR, Kalb}, $r(K_4, K_4)=18$ \cite{GG}, and $r(K_4, K_5)=25$ \cite{Ka1, MR4}.  In Section \ref{main2}, we focus on chromatic Gallai-Ramsey numbers of the form $gr^3_2(K_3, K_j, K_k)$, when $j\in \{3, 4, 5, 6, 7, 8, 9\}$ and $k\ge j$.  In  Section \ref{main3}, we determine the values of $gr^3_2(K_4, K_j, K_k)$, when $j\in\{4,5\}$ and $k\ge j$.  Finally, we conclude by describing some directions for future work on this topic.

\section{General Results for $gr^3_2(K_i, K_j, K_k)$}\label{main1}



When $1\le j\le k$, every complete graph contains at least one vertex, and every $K_1$ contains a red/blue $K_1$ (the colors are irrelevant since $K_1$ does not contain any edges).  It follows that $$gr^3_2(K_1, K_j, K_k)=1, \qquad \mbox{for all $1\le j\le k$.}$$
If $2\le j\le k$, then $gr^3_2(K_2, K_j, K_k)\ge j$ follows from considering a green $K_{j-1}$, which lacks a red/blue $K_2$, a red/green $K_{j}$, and a blue/green $K_k$.  The inequality $gr^3_2(K_2, K_j, K_k)\le j$ follows from the fact that every Gallai $3$-coloring of $K_{j}$ contains a red/blue $K_2$ (that is, a red edge or a blue edge), or all edges are green, forming a red/green $K_j$.  Thus, $$gr^3_2(K_2, K_j, K_k)=j,\qquad \mbox{for all $2
\le j\le k$.}$$

The following theorem provides a general upper bound for the chromatic Gallai-Ramsey number $gr^3_2(K_i, K_j, K_k)$, with equality when $k$ is sufficiently large (cf., Theorem 2.2 of \cite{KO}).

\begin{theorem}\label{upperbound}
For all $3\le i\le j\le k$, $$gr^3_2(K_i, K_j, K_k)\le r(K_i, K_j).$$
If $k\ge r(K_i, K_j)$, then $$gr^3_2(K_i, K_j, K_k)=r(K_i, K_j).$$
\end{theorem}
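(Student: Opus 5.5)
Set $n=r(K_i,K_j)$ and let $K_n$ carry an arbitrary Gallai $3$-coloring in red, blue, and green. For the upper bound, recolor every blue edge red, so that each edge is either red or green. The definition of $r(K_i,K_j)$ then yields either a $K_i$ whose edges are all red in the recoloring, or a $K_j$ whose edges are all green. A $K_i$ of the first kind is a red/blue $K_i$ in the original coloring. A $K_j$ of the second kind is a green $K_j$, and in particular a red/green $K_j$. Hence $gr^3_2(K_i,K_j,K_k)\le r(K_i,K_j)$. This part uses neither the Gallai hypothesis nor $k$, and it is just the upper bound of Theorem 2.2 of \cite{KO} restated.

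For the equality when $k\ge r(K_i,K_j)$, we need a Gallai $3$-coloring of $K_{n-1}$ that avoids all three forbidden subgraphs. Start from a critical $2$-coloring for $r(K_i,K_j)$ on $K_{n-1}$: red edges contain no red $K_i$, and the other edges contain no $K_j$ in the second color. Paint the second color \emph{blue} instead of green, so that the coloring uses only red and blue.

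\begin{itemize}
\item It is a Gallai coloring, because it uses only two colors and so has no rainbow triangle.
\item It has no red/green $K_j$: with no green edges, such a $K_j$ would be entirely red. Since $j\ge i$, it would contain a red $K_i$, which is impossible.
\item It has no blue/green $K_k$: with no green edges, such a $K_k$ would be entirely blue. Since $k\ge n>n-1$, the graph $K_{n-1}$ has no $K_k$-subgraph at all.
\item It has no red/blue $K_i$? This fails, since every edge is red or blue and $n-1\ge i$.
\end{itemize}

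So this coloring does not work, and the construction has to change. Use red and green instead. Keep red as red and make the second color green. There is then no red/blue $K_i$, since that would be a red $K_i$. There is no red/green $K_j$ only if $K_{n-1}$ has no $K_j$ at all, which is false.

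The right idea is to pass to the full complete graph. Color all edges of $K_{n-1}$ blue? That gives a red/blue $K_i$. Color all edges green? That creates a red/green $K_j$ whenever $j\le n-1$. Neither monochromatic coloring works.

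Reconsider the critical coloring, with red edges and the second color class, which has no $K_j$. Make the red edges \emph{red} and the second class \emph{green}. A red/blue $K_i$ then uses only red edges, so it would be a red $K_i$, which does not exist. A blue/green $K_k$ cannot exist because $k\ge n$. The remaining worry is a red/green $K_j$, which is every $K_j$. This is the main obstacle, and it shows that swapping which colors play which role must be done carefully.

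So take red edges to be \emph{blue} and the second class to be \emph{green}. With no red edges:
\begin{itemize}
\item A red/blue $K_i$ would be a blue $K_i$, coming from the original red class, which has no $K_i$.
\item A red/green $K_j$ would be a green $K_j$, which is impossible.
\item A blue/green $K_k$ cannot exist because $k\ge n$.
\end{itemize}
The coloring uses two colors, so it is Gallai. This gives $gr^3_2(K_i,K_j,K_k)\ge n$, and combined with the upper bound it proves the equality.
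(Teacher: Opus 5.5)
Your final argument is correct and is essentially the paper's. The upper bound merges red and blue exactly as the paper does. Your construction (a critical coloring for $r(K_i,K_j)$ on $K_{n-1}$, with the $K_i$-free class painted blue and the $K_j$-free class painted green) is just the contrapositive of the paper's observation that a blue/green coloring of $K_q$ is a Gallai $3$-coloring with no red edges. Your explicit version also rules out a blue/green $K_k$ directly via $k\ge n>n-1$, a case the paper's argument passes over silently. You should delete the abandoned attempts, which add nothing to the proof.
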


\begin{proof}
Let $p=r(K_i, K_j)$ and consider a Gallai $3$-coloring of $K_p$.  Grouping together the colors red and blue, it follows that there exists a red/blue $K_i$ or a green $K_j$ (and hence, a red/green $K_j$).  Therefore, $gr^3_2(K_i, K_j, K_k)\le p$.  

Now assume that $k\ge r(K_i, K_j)$ and let $q=gr^3_2(K_i, K_j, K_k)$.  Consider a blue/green coloring of $K_q$.  Since it can be viewed as a Gallai $3$-coloring of $K_q$, it follows that there exists a blue $K_i$ (and hence, a red/blue $K_i$) or a green $K_j$ (and hence, a red/green $K_j$).  It follows that $r(K_i, K_j)\le q$.
\end{proof}

This theorem also sheds some light on determining which values of $gr^3_2(K_i, K_j, K_k)$ we can evaluate (i.e., those in which $r(K_i, K_j)$ is known). 
The next theorem limits the rate at which $gr^3_2(K_i, K_j, K_k)$ grows, relative to $k$.  Unlike elsewhere in this paper, it is not necessary to assume an ordering on $i$, $j$, and $k$ in this theorem.

\begin{theorem}\label{rededge}
If $i,j,k\ge 3$ and $gr^3_2(K_i,K_j, K_{k-1})\le p$, then in every Gallai $3$-coloring of $K_{p+1}$, every vertex is incident with a red edge, or there exists a red/blue $K_i$, a red/green $K_j$, or a blue/green $K_k$.  
\end{theorem}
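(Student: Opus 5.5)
Suppose a Gallai $3$-coloring of $K_{p+1}$ contains a vertex $v$ that is incident with no red edge. I will show that it contains a red/blue $K_i$, a red/green $K_j$, or a blue/green $K_k$; this is exactly the statement. The plan is to delete $v$, apply the hypothesis $gr^3_2(K_i,K_j,K_{k-1})\le p$ to the remaining vertices, and then extend the resulting subgraph back through $v$ in the one case where that is needed.

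First I delete $v$. The remaining graph is a $K_p$, and its induced coloring is still Gallai, since every triangle in it is a triangle of the original coloring. By the hypothesis $gr^3_2(K_i,K_j,K_{k-1})\le p$, this $K_p$ (and hence the original $K_{p+1}$) contains a red/blue $K_i$, a red/green $K_j$, or a blue/green $K_{k-1}$. In the first two cases we are done immediately. If $p<gr^3_2(K_i,K_j,K_{k-1})$ could occur, no issue arises: the hypothesis is an upper bound, and every Gallai coloring of $K_p$ still contains one of the three graphs.

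The remaining case is a blue/green $K_{k-1}$, say on vertex set $W$, which avoids $v$. Every edge from $v$ to $W$ is blue or green, because $v$ has no red edges. Therefore $W\cup\{v\}$ spans a $K_k$ all of whose edges are blue or green, which is a blue/green $K_k$.

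I expect no real obstacle. The only points needing care are that the restriction of a Gallai coloring is Gallai, and that the bound $gr^3_2(K_i,K_j,K_{k-1})\le p$ gives one of the three subgraphs in every Gallai coloring of $K_p$. The Gallai condition is not otherwise used, and neither is any ordering of $i$, $j$, $k$, consistent with the remark before the statement.
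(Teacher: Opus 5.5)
Your proof is correct and is essentially identical to the paper's. Both delete the vertex with no red edges, apply $gr^3_2(K_i,K_j,K_{k-1})\le p$ to the remaining Gallai-colored $K_p$, and in the blue/green $K_{k-1}$ case re-attach the deleted vertex via its blue and green edges. (Minor slip: your aside should concern $p>gr^3_2(K_i,K_j,K_{k-1})$, not $p<$; that case is handled by restricting to any $K_{gr^3_2(K_i,K_j,K_{k-1})}$-subgraph.)
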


\begin{proof}
Let $gr^3_2(K_i,K_j, K_{k-1})\le p$ and consider a Gallai $3$-coloring of $K_{p+1}$.  If there exists a vertex $x$ that is only incident with edges that are blue and green, then deleting $x$ results in a Gallai $3$-coloring of $K_p$.  This coloring necessarily contains a red/blue $K_i$, a red/green $K_j$, or a blue/green $K_{k-1}$.  In the last case, including vertex $x$ results in a blue/green $K_k$.  If no such $x$ exists, then every vertex is incident with a red edge.
\end{proof}



\section{The Chromatic Gallai-Ramsey Number $gr^3_2(K_3, K_j, K_k)$}\label{main2}

We begin this section with two lemmas that provide useful upper and lower bounds for $gr^3_2(K_3, K_j, K_k)$.

\begin{lemma}\label{GallaiKn}
For $\ell\in \mathbb{N}$, let $j\ge 2\ell +2$ and $$n\ge r(K_3, K_{j-2\ell})+2\ell.$$  Then every exact Gallai $3$-coloring of $K_n$ that contains a red matching of size $\ell$ must contain a red/blue $K_3$ or a red/green $K_j$.
\end{lemma}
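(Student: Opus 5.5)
The plan is to set the matching aside, apply the classical Ramsey number to what remains, and then show that the matching vertices can be added back without creating a blue edge. Exactness of the coloring will not be needed.

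\textbf{Step 1 (set the matching aside).} Let $M=\{u_1v_1,\dots ,u_\ell v_\ell\}$ be a red matching of size $\ell$ in the given coloring of $K_n$. Let $U=\{u_1,v_1,\dots ,u_\ell,v_\ell\}$, so $|U|=2\ell$, and let $R=V(K_n)\setminus U$. Then $|R|\ge r(K_3,K_{j-2\ell})$, and this Ramsey number is defined because $j-2\ell\ge 2$.

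\textbf{Step 2 (apply the classical Ramsey number to $R$).} Group red and blue together as one color, as in the proof of Theorem \ref{upperbound}. Then $R$ contains either a red/blue $K_3$, in which case we are done, or a green clique $W$ with $|W|=j-2\ell$.

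\textbf{Step 3 (no blue edges inside $U\cup W$).} Assume there is no red/blue $K_3$; we show that no edge of $U\cup W$ is blue. The edges inside $W$ are green and the edges of $M$ are red, so only two kinds of edges need checking.
\begin{itemize}
\item An edge $u_iw$ with $w\in W$. Suppose $u_iw$ is blue and consider the triangle $u_iv_iw$. Its edge $u_iv_i$ is red and $u_iw$ is blue. Since the coloring is Gallai, $v_iw$ cannot be green. Hence $v_iw$ is red or blue, and $u_iv_iw$ is a red/blue $K_3$, a contradiction. The same argument applies with $u_i$ and $v_i$ interchanged.
\item An edge $xy$ with $x\in\{u_a,v_a\}$, $y\in\{u_b,v_b\}$ and $a\ne b$. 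Suppose $xy$ is blue, and let $x'$ be the partner of $x$ in $M$. In the triangle $xx'y$, the edge $xx'$ is red and $xy$ is blue. By the same Gallai argument, $x'y$ is red or blue, which again gives a red/blue $K_3$.
\end{itemize}

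\textbf{Step 4 (conclude).} Every edge of $U\cup W$ is therefore red or green. This set has $2\ell+(j-2\ell)=j$ vertices, so it spans a red/green $K_j$.

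The only delicate point is Step 3. It rests on one observation, used repeatedly: in any triangle containing a red matching edge, a blue edge at one endpoint forces a red/blue triangle, because the Gallai condition rules out green on the third edge. I expect no further obstacles.
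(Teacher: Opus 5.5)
Your proof is correct and follows essentially the same route as the paper: set aside the $2\ell$ matching vertices, apply $r(K_3,K_{j-2\ell})$ to the rest, and use the Gallai condition on triangles through a red matching edge to rule out blue edges back to the matching. Your second bullet in Step 3, on blue edges between different matching edges, makes explicit a case the paper's proof leaves unstated. Grouping red/blue against green is an equivalent alternative to the paper's grouping of blue against red/green.
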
 

\begin{proof}
Consider a Gallai $3$-coloring of $K_n$ with a red matching of size $\ell$.  Suppose that $x_1x_2, x_3x_4, \dots , x_{2\ell-1}x_{2\ell}$ are pairwise disjoint red edges and denote the other vertices by $y_1, y_2, \dots, y_{n-2\ell}$.  If an edge $x_iy_j$ is blue, then either the subgraph induced by $\{x_{i-1},x_i,y_j\}$ or the subgraph induced by $\{x_i, x_{i+1},y_j\}$ is a red/blue $K_3$.  So, assume that all edges joining $X:=\{x_1, x_2, \dots , x_{2\ell}\}$ to $Y:=\{y_1, y_2, \dots , y_{n-2\ell}\}$ are red or green.  Since $n-2\ell \ge r(K_3, K_{j-2\ell})$, the subgraph induced by $Y$ contains a blue $K_3$ (and hence, a red/blue $K_3$) or a red/green $K_{j-2\ell}$.  In the latter case, including the vertices in $X$ results in a red/green $K_j$.
\end{proof}

\begin{lemma}\label{GallaiED}
Let $k\ge j\ge 3$ and assume that $0\leq  \ell < \frac{j}{2}-1$ and $$ k\le r(K_3, K_{j-n})+\ell.$$  If $G$ is a triangle-free graph with independence number $\ell$ and order $n$, then 
$$gr^3_2(K_3, K_j, K_k) \geq k-\ell+n.$$ 
\end{lemma}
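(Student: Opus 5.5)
The plan is to prove the lower bound by an explicit construction. I will exhibit a Gallai $3$-coloring of $K_{k-\ell+n-1}$ that contains no red/blue $K_3$, no red/green $K_j$, and no blue/green $K_k$. The coloring is built from two vertex-disjoint pieces $A$ and $B$, and every edge between them is colored green.

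\textbf{Piece $A$.} On $n$ vertices, take the triangle-free graph $G$ of the hypothesis. Color the edges of $G$ red and the non-edges of $G$ green.

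\textbf{Piece $B$.} Let $m=k-\ell-1$. Since $k\le r(K_3,K_{j-n})+\ell$, we have $m\le r(K_3,K_{j-n})-1$. So we can take a blue/green coloring of $K_m$ obtained by restricting a critical coloring for $r(K_3,K_{j-n})$. This coloring has no blue $K_3$ and no green $K_{j-n}$.

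A preliminary check is that $j-n\ge 1$, so that the critical coloring makes sense. This is where I expect the hypothesis $\ell<\frac{j}{2}-1$ (and implicitly $n<j$) to be used. If $j-n$ is degenerate, $B$ may simply have to be taken small or empty, and I would handle that edge case separately. Apart from this point, the rest is routine bookkeeping.

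\textbf{The coloring is Gallai.} Inside $A$ only red and green occur, and inside $B$ only blue and green occur. A triangle meeting both parts has two cross edges, both green, so it uses at most two colors. Hence there is no rainbow triangle.

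\textbf{No red/blue $K_3$.} Cross edges are green, so a red/blue triangle would lie entirely in $A$ or entirely in $B$. In $A$ it would be a red triangle, that is, a triangle in $G$, which is impossible. In $B$ it would be a blue triangle, which is excluded by the choice of $B$.

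\textbf{No red/green $K_j$.} A red/green clique meets $A$ in at most $n$ vertices. It meets $B$ in a green clique, which has fewer than $j-n$ vertices. The total is therefore less than $j$.

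\textbf{No blue/green $K_k$.} A blue/green clique meets $A$ in a green clique, which is an independent set of $G$ and so has at most $\ell$ vertices. It meets $B$ in at most $m=k-\ell-1$ vertices. The total is at most $k-1$.

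\textbf{Conclusion.} The coloring lives on $n+k-\ell-1$ vertices and avoids all three forbidden configurations. Therefore $gr^3_2(K_3,K_j,K_k)\ge k-\ell+n$.
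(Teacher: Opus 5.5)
Your construction is the paper's own: a red/green $K_n$ whose red graph is $G$, joined by green edges to a blue/green $K_{k-\ell-1}$ with no blue $K_3$ and no green $K_{j-n}$, followed by the same three counting checks. You also verify the Gallai property, which the paper leaves implicit.

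One correction to your hedge. The condition $j-n\ge 1$ is presupposed by the hypothesis $k\le r(K_3,K_{j-n})+\ell$ itself; it is not implied by $\ell<\frac{j}{2}-1$. For example, take $\ell=4$, $j=11$, and a triangle-free graph on $n=13$ vertices with independence number $4$. Nor can that case be handled by shrinking $B$: if $n\ge j$, the piece $A$ alone is already a red/green $K_j$.
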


\begin{proof}
Suppose that $0\leq  \ell < \frac{j}{2}-1$ and let $G$ be a triangle-free graph with independence number $\ell$ and order $n$.  Let $\mathcal{G}$ be a red/green coloring of $K_n$ in which the red subgraph is isomorphic to $G$ and the green subgraph is isomorphic to $\overline{G}$.  
Since $k\le r(K_3, K_{j-n})+\ell$, there exists a blue/green $K_{k-\ell-1}$, which we denote by $\mathcal{H}$,  that avoids a blue $K_3$ and a green $K_{j-n}$.   Take the disjoint union of $\mathcal{G}$ and $\mathcal{H}$ and join them together with green edges.  By construction, no red/blue $K_3$ exists. The largest red/green complete graph that can be formed uses all of the vertices in $\mathcal{G}$ and the vertices from a complete green subgraph in $\mathcal{H}$.  Hence, it will have order at most $n+j-n-1=j-1$.   The largest blue/green complete graph that can be formed uses all of the vertices in $\mathcal{H}$ and $\ell$ independent vertices in the red subgraph of $\mathcal{G}$.  Hence, it will have order at most $k-\ell-1+\ell=k-1$.  It follows that $gr^3_2(K_3, K_j, K_k)\ge k-\ell+n$.
\end{proof}

Now we are ready to focus on the evaluation of $gr^3_2(K_3, K_j, K_k)$, for the cases where $r(K_3, K_j)$ is known. 

\begin{theorem}
If $k\ge 3$, then $$gr^3_2(K_3, K_3, K_k)=\begin{cases} k & \mbox{if $3\le k\le 5$} \\ 6 & \mbox{if $k\ge 6$.} \end{cases}$$
\end{theorem}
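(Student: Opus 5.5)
We need $gr^3_2(K_3,K_3,K_k)$: every Gallai 3-coloring of $K_n$ contains a red/blue triangle, a red/green triangle, or a blue/green $K_k$. The case $k\ge 6$ is immediate from Theorem \ref{upperbound}, since $r(K_3,K_3)=6\le k$ gives $gr^3_2(K_3,K_3,K_k)=6$. Theorem \ref{upperbound} also gives the upper bound $6$ for all $k$, so it remains to treat $3\le k\le 5$ and show that the value is exactly $k$.

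For the lower bound $gr^3_2(K_3,K_3,K_k)\ge k$, I would color $K_{k-1}$ entirely blue when $k=3$. This $K_2$ contains no triangle at all and no blue/green $K_3$. For $k\in\{4,5\}$ I would color $K_{k-1}$ so that green forms a perfect or near-perfect matching and all other edges are blue. With $k=4$ this is $K_3$ with one green edge; with $k=5$ it is $K_4$ with two disjoint green edges. Red is never used, so there is no red/blue or red/green triangle unless blue alone or green alone contains a triangle. Blue is the complement of a matching on $\le 4$ vertices, so on 3 vertices it is a path, and on 4 vertices it is a $C_4$; neither contains a triangle. Green is a matching, so it is triangle-free. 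The whole graph has only $k-1$ vertices, so there is no blue/green $K_k$. With only two colors, the coloring is trivially Gallai.

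For the upper bound, I take a Gallai 3-coloring of $K_k$ with $k\le5$ and suppose it has no red/blue $K_3$ and no red/green $K_3$. I need all edges to be blue/green, i.e., no red edge. Suppose $uv$ is red and $w$ is any third vertex (one exists since $k\ge3$). The triangle $uvw$ contains a red edge. If it contains a blue edge, then since the triangle is not rainbow, its colors lie in $\{$red, blue$\}$, giving a red/blue triangle. Otherwise its colors lie in $\{$red, green$\}$, giving a red/green triangle. Either way we reach a contradiction, so there is no red edge and $K_k$ itself is a blue/green $K_k$. This argument in fact works for every $k\ge3$ and gives $gr^3_2\le k$, which combined with the bound $6$ yields $\min(k,6)$.

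No step is a real obstacle. The only care needed is in the lower-bound constructions for $k=4,5$: the blue and green graphs must both be triangle-free, and since red is absent, a triangle that is red/blue or red/green would have to be monochromatic blue or green. Checking this is a small finite verification.
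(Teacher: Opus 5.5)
Your proof is correct and follows essentially the same route as the paper. Both handle $k\ge 6$ by Theorem \ref{upperbound}, and both get the lower bound from a red-free blue/green $K_{k-1}$ with no monochromatic triangle; you write these colorings down explicitly, while the paper just appeals to $r(K_3,K_3)=6$. For the upper bound, both use the observation that a red edge plus any third vertex spans a non-rainbow triangle. The paper obtains this from Lemma \ref{GallaiKn}, whose stated hypothesis $j\ge 2\ell+2$ actually fails for $j=3$, $\ell=1$, so your inlined version is, if anything, slightly cleaner.
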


\begin{proof} Since $r(K_3, K_3)=6$ \cite{GG}, Theorem \ref{upperbound} implies that $$gr^3_2(K_3, K_3, K_k)=6 \quad \mbox{if $k\ge 6$.}$$  So, assume that $3\le k\le 5$.  Since $r(K_3, K_3)=6$, there exists a blue/green $K_{k-1}$ that avoids a blue $K_3$ (and hence, a red/blue $K_3$) and a green $K_3$ (and hence, a red/green $K_3$).   It follows that $gr^3_2(K_3, K_3, K_k)\ge k$.  To prove the reverse inequality, consider a Gallai $3$-coloring of $K_k$ that avoids a blue/green $K_k$ (so, there exists at least one red edge).   Since $$k\ge r(K_3, K_1)+2=3,$$ Lemma \ref{GallaiKn} implies that there exists a red/blue $K_3$, a red/green $K_3$, or a blue/green $K_k$.  It follows that $gr^3_2(K_3, K_3, K_k)\le k$.
\end{proof}

\begin{theorem}\label{gr34k}
If  $k\ge 4$, then $$gr^3_2(K_3, K_4, K_k)=\begin{cases} 5 & \mbox{if $k=4$} \\ k & \mbox{if $5\le k\le 8$} \\ 9 & \mbox{if $k\ge 9$.}\end{cases}$$
\end{theorem}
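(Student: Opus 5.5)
We split into the three ranges of $k$ and prove matching lower and upper bounds in each. For $k\ge 9$ we use $r(K_3,K_4)=9$ together with Theorem \ref{upperbound}, which gives $gr^3_2(K_3,K_4,K_k)=9$ directly.

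\emph{The range $5\le k\le 8$.} For the lower bound we take a critical blue/green coloring for $r(K_3,K_4)$ restricted to $K_{k-1}$. Since $k-1\le 8$, such a coloring exists and has no blue $K_3$ and no green $K_4$. A blue/green coloring contains no red edges, so it has no red/blue $K_3$ and no red/green $K_4$, and a $K_{k-1}$ cannot contain a blue/green $K_k$. Hence $gr^3_2(K_3,K_4,K_k)\ge k$.

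For the upper bound, take a Gallai $3$-coloring of $K_k$ with no blue/green $K_k$, so that some edge is red. The cleanest tool is Lemma \ref{GallaiKn} with $\ell=1$, which needs $j\ge 4$ and $n\ge r(K_3,K_2)+2=5$; both hold here, with $j=4$ and $n=k\ge5$. The lemma is stated for exact colorings. If the coloring uses only red and green, then the whole $K_k$ is a red/green $K_k$ with $k\ge 4$ and we are done. If it uses only red and blue, then the whole $K_k$ is red/blue with $k\ge 3$, so it contains a red/blue $K_3$. Inspecting the proof of Lemma \ref{GallaiKn}, exactness is never really used, so we may apply it directly. This gives a red/blue $K_3$ or a red/green $K_4$, hence $gr^3_2\le k$.

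\emph{The case $k=4$.} For the lower bound we need a Gallai coloring of $K_4$ avoiding a red/blue $K_3$, a red/green $K_4$ and a blue/green $K_4$. Take a blue/green $K_4$ with no blue triangle and one green edge missing... more concretely: make one edge red, and color the four remaining edges at its endpoints green. This has no rainbow triangle, since any triangle using the red edge has two green edges. It has no red/blue triangle, the whole $K_4$ is not red/green because at least one edge is not, and the whole $K_4$ is not blue/green because it contains a red edge. Choosing the last edge blue does all of this, provided that edge is disjoint from the red edge. Then triangles on that edge have blue+green+green, so there is no rainbow triangle, and a red/blue triangle would need two non-green edges, which is impossible. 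So $gr\ge5$.

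For the upper bound on $K_5$, apply Lemma \ref{GallaiKn} with $\ell=1$ and $n=5=r(K_3,K_2)+2$, handling a missing red edge by the same argument as before. The non-exact cases are handled as above.

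The main obstacle is checking that the non-exact cases in the application of Lemma \ref{GallaiKn} are genuinely covered. In particular, the all blue/green case is exactly where a blue/green $K_k$ appears, so it is excluded by assumption.
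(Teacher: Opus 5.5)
Your proof is correct and essentially matches the paper. The $k\ge 9$ case uses Theorem \ref{upperbound}; for $5\le k\le 8$ you take a restricted $r(K_3,K_4)$ critical coloring and apply Lemma \ref{GallaiKn} with $\ell=1$; and your $k=4$ coloring (red edge, disjoint blue edge, green between) is exactly what the paper obtains from Lemma \ref{GallaiED} with $G\cong K_2$. The one difference is the $k=4$ upper bound. You apply Lemma \ref{GallaiKn} with $\ell=1$, $n=5$, which needs only a single red edge; the paper instead finds a red matching of size two and uses $\ell=2$. Your choice is cleaner, since $\ell=2$, $j=4$ violates the lemma's hypothesis $j\ge 2\ell+2$, and the paper's matching argument overlooks the case where the red edges form a triangle.
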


\begin{proof} Since $r(K_3, K_4)=9$ \cite{GG}, Theorem \ref{upperbound} implies that $$gr^3_2(K_3, K_4, K_k)=9 \quad \mbox{if $k\ge 9$.}$$ For $k<9$, we separate the proof into the following cases.

\underline{Case 1:} Assume that $k=4$.  If we let $G\cong K_2$ (so that $\ell=1$ and $n=2$) in Lemma \ref{GallaiED} and note that $k\le r(K_3, K_2)+\ell =4$, then it follows that $gr^3_2(K_3, K_4, K_4)\ge 5$.
To prove the reverse inequality, consider a Gallai $3$-coloring of $K_{5}$ that avoids a blue/green $K_4$.  Then there exists a red matching of size at least two (otherwise, it is possible to delete a single vertex that removes all red edges, leaving a blue/green $K_5$).  Since $$5\ge r(K_3, K_1)+4=5,$$ Lemma \ref{GallaiKn} implies that there exists a red/blue $K_3$, a red/green $K_4$, or a blue/green $K_4$.  It follows that $gr^3_2(K_3, K_4, K_4)\le 5$.

\underline{Case 2:} Assume that $5\le k\le 8$.  Since $r(K_3, K_4)=9$, there exists a blue/green $K_{k-1}$ that avoids a blue $K_3$ (and hence, a red/blue $K_3$) and a green $K_4$ (and hence, a red/green $K_4$).   It follows that $gr^3_2(K_3, K_4, K_k)\ge k$.  To prove the reverse inequality, consider a Gallai $3$-coloring of $K_k$ that avoids a blue/green $K_k$ (so, there exists at least one red edge).   Since $$k\ge r(K_3, K_2)+2=5,$$ Lemma \ref{GallaiKn} implies that there exists a red/blue $K_3$, a red/green $K_4$, or a blue/green $K_k$.  It follows that $gr^3_2(K_3, K_4, K_k)\le k$.
\end{proof}

\begin{theorem}
If $k\ge 5$, then $$gr^3_2(K_3, K_5, K_k)=\begin{cases} k+1 & \mbox{if $5\le k\le 7$} \\ k & \mbox{if $8\le k\le 13$} \\ 14 & \mbox{if $k\ge 14$.}\end{cases}$$
\end{theorem}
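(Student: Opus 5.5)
The plan is to split into the three ranges of $k$ and get matching lower and upper bounds in each, using $r(K_3,K_3)=6$ and $r(K_3,K_5)=14$ \cite{GG}.

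\emph{Case $k\ge 14$.} Since $r(K_3,K_5)=14$, Theorem \ref{upperbound} gives $gr^3_2(K_3,K_5,K_k)=14$ directly.

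\emph{Case $8\le k\le 13$.} For the lower bound, $k-1\le 12<14=r(K_3,K_5)$, so there is a blue/green $K_{k-1}$ with no blue $K_3$ and no green $K_5$. This coloring has no red/blue $K_3$, no red/green $K_5$ and no blue/green $K_k$, so $gr^3_2(K_3,K_5,K_k)\ge k$. For the upper bound, take a Gallai $3$-coloring of $K_k$ with no blue/green $K_k$; then some edge is red, giving a red matching of size $\ell=1$. Since $j=5\ge 2\ell+2$ and $k\ge r(K_3,K_3)+2=8$, Lemma \ref{GallaiKn} gives a red/blue $K_3$ or a red/green $K_5$. (If the coloring is not exact, the same argument as in the proof of that lemma still applies.)

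\emph{Case $5\le k\le 7$.} For the lower bound I will apply Lemma \ref{GallaiED} with $G\cong K_2$, so $n=2$ and $\ell=1$. The hypotheses hold: $\ell=1<\frac52-1$, and $k\le 7=r(K_3,K_3)+1$. This gives $gr^3_2(K_3,K_5,K_k)\ge k-1+2=k+1$.

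For the upper bound, take a Gallai $3$-coloring of $K_{k+1}$ with no red/blue $K_3$ and no blue/green $K_k$. I first show the red edges contain a matching of size two.
\begin{itemize}
\item Red edges exist, otherwise the whole graph is a blue/green $K_{k+1}$.
\item Suppose there is no red matching of size two. Then the red graph is a star or a triangle.
\item A red triangle is itself a red/blue $K_3$.
\item If the red graph is a star, deleting its center leaves a blue/green $K_k$.
\end{itemize}
So there are disjoint red edges $x_1x_2$ and $x_3x_4$; let $X=\{x_1,\dots,x_4\}$.

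Lemma \ref{GallaiKn} needs $j\ge 6$ when $\ell=2$, so it cannot be quoted here, and this is the main step to handle by hand. I will instead show directly that every edge meeting $X$ is red or green.
\begin{itemize}
\item Edges from $X$ to any other vertex $y$: if $x_1y$ were blue, then $x_2y$ must be red or blue (green would make $x_1x_2y$ rainbow), and either choice gives a red/blue triangle. The same holds for the other vertices of $X$, as in the proof of Lemma \ref{GallaiKn}.
\item Edges inside $X$: if $x_1x_3$ were blue, then $x_2x_3$ cannot be green (the triangle $x_1x_2x_3$ would be rainbow) and cannot be red or blue (it would be a red/blue triangle). So $x_1x_3$ is not blue, and likewise for the other cross pairs.
\end{itemize}
Hence $X$ together with any further vertex $y$ spans a red/green $K_5$. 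Such a $y$ exists because $k+1\ge 6>4$. This gives $gr^3_2(K_3,K_5,K_k)\le k+1$ and completes the case.
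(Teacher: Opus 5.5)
Your proof is correct and follows the same route as the paper: Theorem \ref{upperbound} for $k\ge 14$; a blue/green critical coloring for $r(K_3,K_5)$ together with Lemma \ref{GallaiKn} (with $\ell=1$) for $8\le k\le 13$; and Lemma \ref{GallaiED} with $G\cong K_2$ together with a red matching of size two for $5\le k\le 7$. The one difference is in the last upper bound, where the paper simply quotes Lemma \ref{GallaiKn} with $\ell=2$ (even though $j=5<2\ell+2$, and its matching argument overlooks the red-triangle case), whereas you check directly that every edge meeting $X$ is red or green, which tightens the same argument rather than changing the approach.
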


\begin{proof} Since $r(K_3, K_5)=14$ \cite{GG}, Theorem \ref{upperbound} implies that $$gr^3_2(K_3, K_5, K_k)=14 \quad \mbox{if $k\ge 14$.}$$ For $k<14$, we separate the proof into the following cases.

\underline{Case 1:} Assume that $5\le k\le 7$.  If we let $G\cong K_2$ (so that $\ell=1$ and $n=2$) in Lemma \ref{GallaiED} and note that $k\le r(K_3, K_3)+\ell =7$, then it follows that $gr^3_2(K_3, K_5, K_k)\ge k+1$.
To prove the reverse inequality, consider a Gallai $3$-coloring of $K_{k+1}$ that avoids a blue/green $K_k$.  Then there exists a red matching of size at least two (otherwise, it is possible to delete a single vertex that removes all red edges, leaving a blue/green $K_k$).  Since $k+1\ge r(K_3, K_1)+4=5,$ Lemma \ref{GallaiKn} implies that there exists a red/blue $K_3$, a red/green $K_5$, or a blue/green $K_k$.  It follows that $gr^3_2(K_3, K_5, K_k)\le k+1$.

\underline{Case 2:} Assume that $8\le k\le 13$.  Since $r(K_3, K_5)=14$, there exists a blue/green $K_{k-1}$ that avoids a blue $K_3$ (and hence, a red/blue $K_3$) and a green $K_5$ (and hence, a red/green $K_5$).   It follows that $gr^3_2(K_3, K_5, K_k)\ge k$.  To prove the reverse inequality, consider a Gallai $3$-coloring of $K_k$ that avoids a blue/green $K_k$ (so, there exists at least one red edge).   Since $$k\ge r(K_3, K_3)+2=8,$$ Lemma \ref{GallaiKn} implies that there exists a red/blue $K_3$, a red/green $K_5$, or a blue/green $K_k$.  It follows that $gr^3_2(K_3, K_5, K_k)\le k$.
\end{proof}

\begin{theorem}
If $k\ge 6$, then $$gr^3_2(K_3, K_6, K_k)=\begin{cases} k+1 & \mbox{if $6\le k \le 10$} \\ k & \mbox{if $11\le k \le 17$} \\ 18 & \mbox{if $k\ge 18$.}\end{cases}$$
\end{theorem}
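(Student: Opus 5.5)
We follow the pattern of the previous theorems, with $r(K_3,K_6)=18$ \cite{K}. For $k\ge 18$, Theorem \ref{upperbound} gives $gr^3_2(K_3,K_6,K_k)=18$ directly. For $6\le k\le 17$ we treat two ranges separately. In each range the lower bound comes from a construction (a critical coloring or Lemma \ref{GallaiED}), and the upper bound comes from Lemma \ref{GallaiKn} applied to a red matching.

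\underline{Range $11\le k\le 17$.} For the lower bound, $r(K_3,K_6)=18>k-1$, so there is a blue/green $K_{k-1}$ with no blue $K_3$ and no green $K_6$. It has no red/blue $K_3$, no red/green $K_6$, and no blue/green $K_k$, so $gr^3_2(K_3,K_6,K_k)\ge k$. For the upper bound, take a Gallai $3$-coloring of $K_k$ with no blue/green $K_k$. Then some edge is red, so there is a red matching of size $\ell=1$. Since $k\ge 11=r(K_3,K_4)+2$, Lemma \ref{GallaiKn} (with $j=6\ge 4$) yields a red/blue $K_3$ or a red/green $K_6$.

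\underline{Range $6\le k\le 10$.} For the lower bound, apply Lemma \ref{GallaiED} with $G\cong K_2$, so $\ell=1$ and $n=2$. The hypotheses hold: $1<6/2-1=2$, and $k\le r(K_3,K_4)+1=10$. This gives $gr^3_2(K_3,K_6,K_k)\ge k+1$. For the upper bound, take a Gallai $3$-coloring of $K_{k+1}$ with no blue/green $K_k$. We claim there is a red matching of size $2$. If every red edge met a single vertex $v$, deleting $v$ would leave a blue/green $K_k$. If the maximum red matching has size $1$, the red edges form a star or a triangle. For a star, delete its center. A red triangle is itself a red/blue $K_3$, so we are done in that case. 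Now, with $\ell=2$, Lemma \ref{GallaiKn} requires $j=6\ge 2\ell+2=6$ and $k+1\ge r(K_3,K_2)+4=7$, both of which hold. Hence there is a red/blue $K_3$ or a red/green $K_6$, and so $gr^3_2(K_3,K_6,K_k)\le k+1$.

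\textbf{Main obstacle.} The delicate step is the upper bound for $6\le k\le 10$. One must extract a red matching of size two, in particular excluding the red-triangle case, which the earlier proofs passed over quickly. One must also check the exactness hypothesis of Lemma \ref{GallaiKn}. That lemma's proof never really uses exactness, so its argument applies verbatim to any Gallai $3$-coloring. Everything else is a direct numerical check against $r(K_3,K_2)=3$, $r(K_3,K_4)=9$, and $r(K_3,K_6)=18$.
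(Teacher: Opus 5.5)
Your proposal is correct and matches the paper's proof step for step. It uses Theorem~\ref{upperbound} for $k\ge 18$, the $r(K_3,K_6)$ critical coloring plus Lemma~\ref{GallaiKn} with $\ell=1$ for $11\le k\le 17$, and Lemma~\ref{GallaiED} with $G\cong K_2$ plus Lemma~\ref{GallaiKn} with $\ell=2$ for $6\le k\le 10$. Your treatment of the red-triangle case (where no single vertex meets every red edge) and your remark that the exactness hypothesis of Lemma~\ref{GallaiKn} is never used are small refinements that close gaps the paper passes over.
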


\begin{proof} Since $r(K_3, K_6)=18$ \cite{K}, Theorem \ref{upperbound} implies that $$gr^3_2(K_3, K_6, K_k)=18 \quad \mbox{if $k\ge 18$.}$$ For $k<18$, we separate the proof into the following cases.

\underline{Case 1:} Assume that $6\le k\le 10$.   If we let $G\cong K_2$ (so that $\ell=1$ and $n=2$) in Lemma \ref{GallaiED} and note that $k\le r(K_3, K_4)+\ell =10$, then it follows that $gr^3_2(K_3, K_6, K_k)\ge k+1$.
To prove the reverse inequality, consider a Gallai $3$-coloring of $K_{k+1}$ that avoids a blue/green $K_k$.  Then there exists a red matching of size at least two (otherwise, it is possible to delete a single vertex that removes all red edges, leaving a blue/green $K_k$).  Since $k+1\ge r(K_3, K_2)+4=7,$ Lemma \ref{GallaiKn} implies that there exists a red/blue $K_3$, a red/green $K_6$, or a blue/green $K_k$.  It follows that $gr^3_2(K_3, K_6, K_k)\le k+1$.

\underline{Case 2:} Assume that $11\le k\le 17$.  Since $r(K_3, K_6)=18$, there exists a blue/green $K_{k-1}$ that avoids a blue $K_3$ (and hence, a red/blue $K_3$) and a green $K_6$ (and hence, a red/green $K_6$).   It follows that $gr^3_2(K_3, K_6, K_k)\ge k$.  To prove the reverse inequality, consider a Gallai $3$-coloring of $K_k$ that avoids a blue/green $K_k$ (so, there exists at least one red edge).   Since $$k\ge r(K_3, K_4)+2=11,$$ Lemma \ref{GallaiKn} implies that there exists a red/blue $K_3$, a red/green $K_6$, or a blue/green $K_k$.  It follows that $gr^3_2(K_3, K_6, K_k)\le k$.
\end{proof}

\begin{theorem}
If $k\ge 7$, then $$gr^3_2(K_3, K_7, K_k)=\begin{cases} k+2 & \mbox{if $7\le k\le 8$} \\ k+1 & \mbox{if $9\le k\le 15$} \\ k & \mbox{if $16\le k \le 22$} \\ 23 & \mbox{if $k\ge 23$.} \end{cases}$$
\end{theorem}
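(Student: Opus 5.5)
We follow the pattern of the preceding theorems, using $r(K_3,K_7)=23$ together with Theorem \ref{upperbound}, Lemma \ref{GallaiKn}, and Lemma \ref{GallaiED}. For $k\ge 23$, Theorem \ref{upperbound} gives $gr^3_2(K_3,K_7,K_k)=23$ directly. The remaining range $7\le k\le 22$ splits into three cases according to how large a red matching must be forced before Lemma \ref{GallaiKn} applies with $j=7$.

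\emph{Case $16\le k\le 22$.} For the lower bound, take a critical coloring for $r(K_3,K_7)$ in blue/green and restrict it to $K_{k-1}$. This contains no red/blue $K_3$ and no red/green $K_7$, and it is too small to contain a blue/green $K_k$. For the upper bound, a Gallai $3$-coloring of $K_k$ with no blue/green $K_k$ has a red edge, that is, a red matching of size $\ell=1$. Since $k\ge r(K_3,K_5)+2=16$, Lemma \ref{GallaiKn} applies.

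\emph{Case $9\le k\le 15$.} For the lower bound, apply Lemma \ref{GallaiED} with $G\cong K_2$, so $n=2$ and $\ell=1$. We need $k\le r(K_3,K_5)+1=15$ and $1<7/2-1$, both of which hold, giving $gr^3_2\ge k+1$. For the upper bound, take a Gallai $3$-coloring of $K_{k+1}$ with no blue/green $K_k$. If there were no red matching of size $2$, all red edges would share a common vertex or form a triangle. A red triangle cannot be in blue/green, but a triangle with all edges red lies in a single colour, so the rainbow-triangle condition is not the obstacle here. The honest argument is that deleting one vertex (the star centre) or two vertices (of the triangle) leaves a blue/green $K_{k-1}$ or $K_k$. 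This falls short, so the paper's argument "delete a single vertex" seems to ignore the red triangle case. I would instead note that a red triangle is itself a red/blue $K_3$. So either there is a red/blue $K_3$, or the red graph is a star, in which case deleting its centre gives a blue/green $K_k$; otherwise there is a red matching of size $2$. Since $k+1\ge r(K_3,K_3)+4=10$, Lemma \ref{GallaiKn} with $\ell=2$ finishes the case.

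\emph{Case $7\le k\le 8$.} This is the expected main obstacle. For the lower bound, apply Lemma \ref{GallaiED} with $G=C_5$, so $n=5$, independence number $\ell=2$, and $G$ is triangle-free. The hypothesis $\ell<j/2-1=5/2$ holds, and $k\le r(K_3,K_2)+2=5$ fails, so $C_5$ is unusable. Instead take $G=K_2\cup K_2$ ($n=4$, $\ell=2$), which needs $k\le r(K_3,K_3)+2=8$; this holds and gives $gr^3_2\ge k-2+4=k+2$. For the upper bound, take a Gallai coloring of $K_{k+2}$ with no blue/green $K_k$ and no red/blue $K_3$. Then the red graph is triangle-free, and it has no vertex cover of size $\le 2$, since otherwise deleting the cover would leave a blue/green $K_k$. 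A triangle-free graph with matching number at most $2$ that needs $3$ cover vertices must contain a $C_5$ component, because König-type reasoning for graphs with $\nu=2$ forces an odd cycle, and the only triangle-free option is $C_5$. Otherwise there is a red matching of size $3$, and since $k+2\ge r(K_3,K_1)+6=7$, Lemma \ref{GallaiKn} with $\ell=3$ applies. In the $C_5$ situation, take $X$ to be the five cycle vertices. Exactly as in the proof of Lemma \ref{GallaiKn}, each outside vertex sends only red or green edges to $X$, since any blue edge creates a red/blue $K_3$ with two adjacent cycle vertices. Moreover, red edges from an outside vertex to two adjacent cycle vertices would form a red triangle. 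The remaining $k-3\ge 4$ vertices then have no blue triangle. I would argue that they give a red/green $K_2$ extendable by $X$ to a red/green $K_7$. Since $r(K_3,K_2)=3$, any $3$ of them contain a blue edge or a red/green $K_2$, and adding $X$ yields a red/green $K_7$. This completes the plan; verifying this $C_5$ subcase carefully is the hard step.
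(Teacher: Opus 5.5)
Your proposal is correct. For $k\ge 9$, and for the lower bound when $7\le k\le 8$ (where you rightly discard $C_5$ in favor of $2K_2$), it follows the paper's proof. Your observation that a red graph with matching number $1$ might be a triangle (itself a red/blue $K_3$) validly patches the paper's ``delete a single vertex'' step.

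The genuine difference is the upper bound for $7\le k\le 8$. The paper splits the vertices into the set $X$ of vertices incident with red edges and the rest, $Y$. It notes that $X$--$Y$ edges are green, that $X$ has no blue edges and $Y$ no red ones, and then counts. We have $|X|\le 5$ and $|Y|\le k-2$, and a green edge inside $X$ forces $|Y|\le k-3$. Hence $|X|=5$ and $|Y|=k-3\ge 4$, and a green edge in $Y$ together with $X$ gives a red/green $K_7$.

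You instead study the red graph. It is triangle-free with vertex cover number $\tau\ge 3$, so either its matching number satisfies $\nu\ge 3$, or it is exactly a $C_5$ plus isolated vertices. This isolates the extremal red structure, but it needs two things the counting argument avoids.

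First, make the structural step precise:
\begin{itemize}
\item a bipartite red graph would have $\tau=\nu\le 2$ by K\"onig's theorem, so there is an odd cycle;
\item a shortest odd cycle has length $5$, since an odd cycle of length at least $7$ contains a $3$-matching;
\item any other red edge is either a chord, which creates a triangle, or meets a vertex off the cycle, which creates a $3$-matching.
\end{itemize}

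Second, applying Lemma~\ref{GallaiKn} with $\ell=3$, $j=7$ violates its stated hypothesis $j\ge 2\ell+2$. The conclusion still holds by the same reasoning as in its proof: no edge among the six matched vertices, or from them to a seventh vertex, can be blue without creating a red/blue $K_3$. You should say this explicitly rather than cite the lemma.

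Two small fixes remain in the $C_5$ subcase. You must check that the chords are green, so that $X$ is red/green: a red chord makes a red triangle, and a blue chord makes a red/blue triangle with two cycle edges. Also, the Ramsey step should read ``a blue $K_3$ or a red/green $K_2$,'' not ``a blue edge.''
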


\begin{proof} Since $r(K_3, K_7)=23$ \cite{GY, Kalb}, Theorem \ref{upperbound} implies that $$gr^3_2(K_3, K_7, K_k)=23 \quad \mbox{if $k\ge 23$.}$$ For $k<23$, we separate the proof into the following cases.

\underline{Case 1:}  Assume that $7\le k\le 8$.  If we let $G\cong 2K_2$ (so that $\ell =2$ and $n=4$) in Lemma \ref{GallaiED} and note that $k\le r(K_3, K_3)+\ell =8$, then it follows that $gr^3_2(K_3, K_7, K_k)\ge k+2$.  To prove the reverse inequality, consider a Gallai $3$-coloring of $K_{k+2}$ that avoids a red/blue $K_3$, a red/green $K_7$, and a blue/green $K_k$.  Let $X$ be the set of vertices that are incident with red edges and let $Y$ be the set of vertices that are not in $X$.  Note that all edges joining vertices in $X$ to vertices in $Y$ are green, all red edges are contained in the subgraph induced by $X$, and all blue edges are contained in the subgraph induced by $Y$.

Since the subgraph induced by $X\cup \{y\}$, for any $y\in Y$, is a complete red/green subgraph, assume that $|X|\le 5$.  Since the subgraph induced by $Y\cup \{x\}$, for any $x\in X$, is a complete blue/green subgraph, assume that $|Y|\le k-2$, which implies that $|X|\ge k+2-(k-2)=4$.  The subgraph induced by $X$ must contain at least one green edge (as a red $K_3$ is avoided), which we denote by $x_1x_2$.  So, the subgraph induced by $Y\cup \{x_1,x_2\}$ is a blue/green complete subgraph, implying that $|Y|\le k-3$.  Thus, $|X|=5$ and $|Y|=k-3$.  From the assumption that $k\ge 7$, we find that $|Y|\ge 4$, from which it follows that $Y$ contains a green edge (since a blue $K_3$ is avoided).  Denoting such a green edge by $y_1y_2$, the subgraph induced by $X\cup \{y_1, y_2\}$ must be a red/green $K_7$, leading to a contradiction.  Hence, every Gallai $3$-coloring of $K_{k+2}$ contains a red/blue $K_3$, a red/green $K_7$, or a blue/green $K_k$, when $7\le k\le 8$.  It follows that $gr^3_2(K_3, K_7, K_k)\le k+2$.

\underline{Case 2:} Assume that $9\le k\le 15$.   If we let $G\cong K_2$ (so that $\ell=1$ and $n=2$) in Lemma \ref{GallaiED} and note that $k\le r(K_3, K_5)+\ell =15$, then it follows that $gr^3_2(K_3, K_7, K_k)\ge k+1$.
To prove the reverse inequality, consider a Gallai $3$-coloring of $K_{k+1}$ that avoids a blue/green $K_k$.  Then there exists a red matching of size at least two (otherwise, it is possible to delete a single vertex that removes all red edges, leaving a blue/green $K_k$).  Since $k+1\ge r(K_3, K_3)+4=10,$ Lemma \ref{GallaiKn} implies that there exists a red/blue $K_3$, a red/green $K_7$, or a blue/green $K_k$.  It follows that $gr^3_2(K_3, K_7, K_k)\le k+1$.

\underline{Case 3:} Assume that $16\le k\le 22$.  Since $r(K_3, K_7)=23$, there exists a blue/green $K_{k-1}$ that avoids a blue $K_3$ (and hence, a red/blue $K_3$) and a green $K_7$ (and hence, a red/green $K_7$).   It follows that $gr^3_2(K_3, K_7, K_k)\ge k$.  To prove the reverse inequality, consider a Gallai $3$-coloring of $K_k$ that avoids a blue/green $K_k$ (so, there exists at least one red edge).   Since $$k\ge r(K_3, K_5)+2=16,$$ Lemma \ref{GallaiKn} implies that there exists a red/blue $K_3$, a red/green $K_7$, or a blue/green $K_k$.  It follows that $gr^3_2(K_3, K_7, K_k)\le k$.
\end{proof}

\begin{theorem}
If $k\ge 8$, then $$gr^3_2(K_3, K_8, K_k)=\begin{cases} 11 & \mbox{if $8\le k\le 9$} \\ k+2 & \mbox{if $10\le k\le 11$} \\ k+1 & \mbox{if $12\le k\le 19$} \\ k & \mbox{if $20\le k\le 27$} \\ 28 & \mbox{if $k\ge 28$.} \end{cases}$$
\end{theorem}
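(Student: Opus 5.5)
We prove the five ranges of $k$ separately. The two top ranges follow the pattern of the earlier theorems, the middle ones use Lemma \ref{GallaiED} together with either Lemma \ref{GallaiKn} or a direct structural argument, and $8\le k\le 9$ is handled through monotonicity in $k$.

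\textbf{Top ranges.} For $k\ge 28$, the value follows from Theorem \ref{upperbound}, since $r(K_3,K_8)=28$.

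For $20\le k\le 27$, take a critical blue/green coloring for $r(K_3,K_8)$ restricted to $K_{k-1}$. It has no blue $K_3$ and no green $K_8$, so $gr^3_2(K_3,K_8,K_k)\ge k$. For the upper bound, a Gallai $3$-coloring of $K_k$ with no blue/green $K_k$ has a red edge. Since $k\ge r(K_3,K_6)+2=20$, Lemma \ref{GallaiKn} with $\ell=1$ applies.

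For $12\le k\le 19$, apply Lemma \ref{GallaiED} with $G\cong K_2$ ($\ell=1$, $n=2$). This is valid because $1<3$ and $k\le r(K_3,K_6)+1=19$, and it gives the lower bound $k+1$. For the upper bound, a Gallai coloring of $K_{k+1}$ with no blue/green $K_k$ has a red matching of size $2$; otherwise deleting one vertex removes all red edges. Since $k+1\ge r(K_3,K_4)+4=13$, Lemma \ref{GallaiKn} with $\ell=2$ finishes.

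\textbf{Lower bounds for $8\le k\le 11$.} For $10\le k\le 11$, apply Lemma \ref{GallaiED} with $G\cong 2K_2$ ($\ell=2$, $n=4$), which needs $k\le r(K_3,K_4)+2=11$. For $k=9$ the same choice gives $11=k+2$. For $k=8$ we need $n-\ell=3$, so take $G\cong C_5$ ($\ell=2$, $n=5$). It is triangle-free, $2<3$, and $k=8\le r(K_3,K_3)+2$, so the bound is $8-2+5=11$.

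\textbf{Upper bounds for $8\le k\le 11$: structural setup.} Here Lemma \ref{GallaiKn} would require a red matching of size $3$, which is not guaranteed, so we mimic Case 1 of the $K_7$ theorem instead. Take a Gallai coloring of $K_{k+2}$ with no red/blue $K_3$, no red/green $K_8$ and no blue/green $K_k$. Let $X$ be the set of vertices incident with red edges and $Y$ the rest.

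We first check that there are no blue edges inside $X$ and none between $X$ and $Y$. Let $x$ be incident with a red edge $xx'$, and suppose the edge $xz$ is blue. In the triangle $xx'z$, a green edge $x'z$ creates a rainbow triangle, while a red or blue $x'z$ creates a red/blue $K_3$. Hence every edge from $X$ to $Y$ is green, $X$ induces a red/green subgraph and $Y$ a blue/green subgraph. The red graph on $X$ is triangle-free and the blue graph on $Y$ is triangle-free.

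This gives three facts.
\begin{itemize}
\item[(a)] For any $y\in Y$ the set $X\cup\{y\}$ is red/green, so $|X|\le 6$.
\item[(b)] If $Y$ contains a green edge $y_1y_2$, then $X\cup\{y_1,y_2\}$ is red/green, so $|X|\le 5$.
\item[(c)] If $|Y|\ge 6=r(K_3,K_3)$, then $Y$ has a green triangle, so $|X|\le 4$.
\end{itemize}

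In the other direction, $Y\cup\{x\}$ is blue/green, so $|Y|\le k-2$ and hence $|X|\ge 4$. A red triangle-free graph on at least $3$ vertices has a non-edge, so $X$ contains a green edge $x_1x_2$. Then $Y\cup\{x_1,x_2\}$ is blue/green, so $|Y|\le k-3$ and hence $|X|\ge 5$.

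\textbf{Upper bounds for $8\le k\le 11$: finishing.} For $10\le k\le 11$ we have $|Y|=k+2-|X|\ge k-4\ge 6$. By (c), $|X|\le 4$, contradicting $|X|\ge 5$.

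For $k=9$ (on $K_{11}$), the same contradiction arises whenever $|Y|\ge 6$, so $|Y|\le 5$ and $|X|\ge 6$. Then $|Y|=11-|X|\ge 4$. Since $Y$ has no blue triangle and at least $3$ vertices, it contains a green edge, and (b) gives $|X|\le 5$, a contradiction.

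For $k=8$, we use $gr^3_2(K_3,K_8,K_8)\le gr^3_2(K_3,K_8,K_9)=11$. This holds because avoiding a blue/green $K_8$ implies avoiding a blue/green $K_9$.

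The main obstacle is this $8\le k\le 11$ range, where the matching lemma is insufficient. The key steps are the exclusion of blue edges meeting $X$ and the counting with $r(K_3,K_3)=6$.
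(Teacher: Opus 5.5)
Your proposal is correct and follows essentially the paper's proof: Theorem \ref{upperbound} for $k\ge 28$, Lemma \ref{GallaiED} with $K_2$, $2K_2$ or $C_5$ for the lower bounds, Lemma \ref{GallaiKn} with $\ell=1,2$ for the upper bounds when $k\ge 12$, and, for $9\le k\le 11$, the split into red-incident vertices $X$ and the rest $Y$ together with $r(K_3,K_3)=6$, with $k=8$ then obtained by monotonicity. The differences are only minor improvements: you justify that all $X$--$Y$ edges are green, which the paper merely asserts, and you correctly use $|X|\le 6$ to get $|Y|\ge k-4\ge 6$ when $10\le k\le 11$, whereas the paper writes $|Y|\ge k-3\ge 7$, which does not follow from its preceding bounds (its conclusion still holds with $k-4$).
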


\begin{proof} Since $r(K_3, K_8)=28$ \cite{GR, MZ}, Theorem \ref{upperbound} implies that $$gr^3_2(K_3, K_8, K_k)=28 \quad \mbox{if $k\ge 28$.}$$ For $k<28$, we separate the proof into the following cases.

\underline{Case 1:} Assume that $8\le k\le 9$.  If we let $G\cong C_5$ (so that $\ell =2$ and $n=5$) in Lemma \ref{GallaiED} and note that $8\le r(K_3,K_3)+\ell=8$, then it follows that $$gr^3_2(K_3, K_8, K_9)\ge gr^3_2(K_3, K_8, K_8)\ge 11.$$
Next, consider a Gallai $3$-coloring of $K_{11}$ that avoids a red/blue $K_3$, a red/green $K_8$, and a blue/green $K_9$.  Let $X$ be the set of vertices that are incident with red edges and let $Y$ be the vertices that are not incident with red edges.  Note that all edges joining vertices in $X$ to vertices in $Y$ are green, all red edges are contained in the subgraph induced by $X$, and all blue edges are contained in the subgraph induced by $Y$.  

Since the subgraph induced by $X\cup \{y\}$, for any $y\in Y$, is a complete red/green subgraph, assume that $|X|\le 6$.  Since the subgraph induced by $Y\cup \{x\}$, for any $x\in X$, is a complete blue/green subgraph, for any $x\in X$, assume that $|Y|\le 7$.  It follows that $4\le |X|\le 6$.  If $|X|=4$ and $|Y|=7$, note that $X$ must contain at least one green edge.  The endpoints of such an edge, along with all of the vertices in $Y$, induce a blue/green $K_9$.  If $|X|=5$ and $|Y|=6$, then $r(K_3, K_3)=6$ implies that $Y$ must contain a green $K_3$.  The vertices in this $K_3$, along with all of the vertices in $X$, induce a red/green $K_8$.  If $|X|=6$ and $|Y|=5$, then $Y$ must contain at least one green edge.  The endpoints of such an edge, along with all of the vertices in $X$, induce a red/green $K_8$.  It follows that every Gallai $3$-coloring of $K_{11}$ contains a red/blue $K_3$, a red/green $K_8$, or a blue/green $K_9$.  Hence, $gr^3_2(K_3, K_8, K_9)\le 11,$ from which it follows that $$gr^3_2(K_3, K_8, K_8)=gr^3_2(K_3, K_8, K_9)=11.$$

\underline{Case 2:} Assume that $10\le k\le 11$.  If we let $G\cong 2K_2$ (so that $\ell=2$ and $n=4$) in Lemma \ref{GallaiED} and note that $k\le r(K_3, K_4)+\ell =11$, then it follows that $gr^3_2(K_3, K_8, K_k)\ge k+2$.
To prove the reverse inequality, consider a Gallai $3$-coloring of $K_{k+2}$ that avoids a red/blue $K_3$, a red/green $K_8$, and a blue/green $K_k$.  Let $X$ be the set of vertices that are incident with red edges, and let $Y$ be the set of vertices that are not incident with red edges.

Since the subgraph induced by $Y\cup \{x\}$, for any $x\in X$, is a complete blue/green subgraph, assume that $|Y|\le k-2$.  It follows that $|X|\ge 4$, so $X$ must contain at least one green edge, which we denote by $x_1x_2$.  The subgraph induced by $Y\cup \{x_1,x_2\}$ is a blue/green complete subgraph, from which it follows that $|Y|\le k-3$ and $|X|\ge 5$.   Now, $|Y|\ge k-3\ge 7$ and $r(K_3, K_3)=6$ \cite{GG} imply that the subgraph induced by $Y$ contains a green $K_3$.  This $K_3$, along with five of the vertices in $X$, form a red/green $K_8$.  It follows that $gr^3_2(K_3, K_8, K_k)\le k+2$.

\underline{Case 3:} Assume that $12\le k\le 19$.  If we let $G\cong K_2$ (so that $\ell=1$ and $n=2$) in Lemma \ref{GallaiED} and note that $k\le r(K_3, K_6)+\ell =19$, then it follows that $gr^3_2(K_3, K_8, K_k)\ge k+1$.
To prove the reverse inequality, consider a Gallai $3$-coloring of $K_{k+1}$ that avoids a blue/green $K_k$.  Then there exists a red matching of size at least two (otherwise, it is possible to delete a single vertex that removes all red edges, leaving a blue/green $K_k$).  Since $k+1\ge r(K_3, K_4)+4=13,$ Lemma \ref{GallaiKn} implies that there exists a red/blue $K_3$, a red/green $K_8$, or a blue/green $K_k$.  It follows that $gr^3_2(K_3, K_8, K_k)\le k+1$.

\underline{Case 4:} Assume that $20\le k\le 27$.  Since $r(K_3, K_8)=28$, there exists a blue/green $K_{k-1}$ that avoids a blue $K_3$ (and hence, a red/blue $K_3$) and a green $K_8$ (and hence, a red/green $K_8$).   It follows that $gr^3_2(K_3, K_8, K_k)\ge k$.  To prove the reverse inequality, consider a Gallai $3$-coloring of $K_k$ that avoids a blue/green $K_k$ (so, there exists at least one red edge).   Since $$k\ge r(K_3, K_6)+2=20,$$ Lemma \ref{GallaiKn} implies that there exists a red/blue $K_3$, a red/green $K_8$, or a blue/green $K_k$.  It follows that $gr^3_2(K_3, K_8, K_k)\le k$.
\end{proof}

\begin{theorem}
If $k\ge 9$, then $$gr^3_2(K_3, K_9, K_k)=\begin{cases} k+3 & \mbox{if $9\le k\le 11$} \\ k+2 & \mbox{if $12\le k \le 16$} \\ k+1 & \mbox{if $17\le k \le 24$} \\ k & \mbox{if $25\le k \le 35$} \\ 36 & \mbox{if $k\ge 36$.} \end{cases}$$
\end{theorem}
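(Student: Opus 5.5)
The plan follows the pattern of the earlier theorems in this section. Throughout we use the known values $r(K_3,K_3)=6$, $r(K_3,K_4)=9$, $r(K_3,K_5)=14$, $r(K_3,K_7)=23$ and $r(K_3,K_9)=36$. The ranges are handled as follows.

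\emph{The range $k\ge 36$.} This is Theorem \ref{upperbound}.

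\emph{The range $25\le k\le 35$.} For the lower bound, a critical coloring for $r(K_3,K_9)$ restricted to $K_{k-1}$ gives a blue/green coloring with no red/blue $K_3$ and no red/green $K_9$. For the upper bound, a single red edge exists, and Lemma \ref{GallaiKn} with $\ell=1$ applies because $k\ge r(K_3,K_7)+2=25$.

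\emph{The range $17\le k\le 24$.} For the lower bound, apply Lemma \ref{GallaiED} with $G\cong K_2$, so $\ell=1$ and $n=2$; this is valid since $k\le r(K_3,K_7)+1=24$ and $1<\tfrac92-1$. For the upper bound, a coloring of $K_{k+1}$ with no blue/green $K_k$ has a red matching of size $2$. Lemma \ref{GallaiKn} with $\ell=2$ then applies, since $k+1\ge r(K_3,K_5)+4=18$.

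\emph{The two smallest ranges: set-up.} For the lower bounds, use Lemma \ref{GallaiED} with $G\cong 2K_2$ ($\ell=2$, $n=4$, valid since $k\le r(K_3,K_5)+2=16$) when $12\le k\le 16$. Use $G\cong C_5$ ($\ell=2$, $n=5$, valid since $k\le r(K_3,K_4)+2=11$) when $9\le k\le 11$.

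For the upper bounds I would use the $X/Y$ partition argument, which I expect to be the main work. Take a Gallai coloring of $K_{k+2}$ (resp.\ $K_{k+3}$) with no red/blue $K_3$, red/green $K_9$ or blue/green $K_k$. Let $X$ be the set of vertices incident with red edges and $Y$ the rest. As in the earlier proofs, every $X$--$Y$ edge is green and $X$ contains no blue edge. Indeed, if $xz$ is red and $xw$ is blue, the edge $zw$ would complete a red/blue or rainbow triangle. Hence:
\begin{itemize}
\item $X$ together with any green clique of $Y$ is red/green, so $|X|+\omega_g(Y)\le 8$;
\item $Y$ together with any red-independent set of $X$ is blue/green, so $|Y|+\alpha\le k-1$, where $\alpha$ is the independence number of the red (triangle-free) graph on $X$;
\item $Y$ has no blue triangle.
\end{itemize}
If $X=\emptyset$, the whole graph is blue/green, which already gives a blue/green $K_k$. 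So $|X|\ge 2$, and we use the bounds $\alpha\ge 1,2,2,2,3,3,3$ for $|X|=2,\dots,8$, which follow from $r(K_3,K_3)=6$ and $r(K_3,K_4)=9$.

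\emph{The range $12\le k\le 16$, on $K_{k+2}$.} Since $|X|\le 7$, we get $|Y|\ge k-5\ge 7$. Then $r(K_3,K_3)=6$ gives a green $K_3$ in $Y$, forcing $|X|\le 5$. Now $|Y|\ge k-3\ge 9$, and $r(K_3,K_4)=9$ gives a green $K_4$ in $Y$, forcing $|X|\le 4$. With $|X|\in\{2,3,4\}$, we have $|Y|+\alpha\ge k$, which contradicts the second bullet.

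\emph{The range $9\le k\le 11$, on $K_{k+3}$.} The second bullet gives $|X|-\alpha\ge 4$, which forces $|X|\ge 6$. Each remaining value fails:
\begin{itemize}
\item if $|X|=8$, then $Y$ has $k-5\ge 4$ vertices and no blue triangle, so it contains a green edge, which is too many;
\item if $|X|=7$, then $Y$ has no green edge and $|Y|\ge 5$, so $Y$ contains a blue triangle;
\item if $|X|=6$, then $|Y|\ge 6$ yields a green $K_3$ in $Y$, giving a red/green $K_9$.
\end{itemize}
The step needing the most care is checking that the case analysis on $|X|$ is exhaustive and that the small-order independence bounds are correct.
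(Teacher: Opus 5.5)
Your proposal is correct and follows the paper's proof essentially step for step. You use the same applications of Theorem~\ref{upperbound}, Lemma~\ref{GallaiKn} and Lemma~\ref{GallaiED} (with $K_2$, $2K_2$, $C_5$), and the same $X/Y$ partition for the upper bounds when $9\le k\le 16$. Packaging that partition argument as the two inequalities $|X|+\omega_g(Y)\le 8$ and $|Y|+\alpha\le k-1$ is only a tidier bookkeeping of the paper's case analysis; and, as in the paper, your ``red matching of size $2$'' step silently uses that a red triangle would already be a red/blue $K_3$.
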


\begin{proof} Since $r(K_3, K_9)=36$ \cite{GR, Kalb}, Theorem \ref{upperbound} implies that $$gr^3_2(K_3, K_9, K_k)=36 \quad \mbox{if $k\ge 36$.}$$ For $k<36$, we separate the proof into the following cases.

\underline{Case 1:} Suppose that $9\le k\le 11$.  If we let $G\cong C_5$ (so that $\ell =2$ and $n=5$) in Lemma \ref{GallaiED} and note that $k\le r(K_3, K_4)+\ell =11$, then it follows that $gr^3_2(K_3, K_9, K_k)\ge k+3$.  To prove the reverse inequality, consider a Gallai $3$-coloring of $K_{k+3}$ that avoids a red/blue $K_3$, a red/green $K_9$, and a blue/green $K_k$.  Let $X$ be the set of vertices that are incident with red edges and let $Y$ be the vertices that are not incident with red edges.  Note that all edges joining vertices in $X$ to vertices in $Y$ are green, all red edges are contained in the subgraph induced by $X$, and all blue edges are contained in the subgraph induced by $Y$.  

Since the subgraph induced by $Y\cup \{x\}$, for any $x\in X$, is a complete blue/green subgraph, assume that $|Y|\le k-2$ and $|X|\ge k+3-(k-2)=5$.  So, $X$ must contain at least one green edge, which we denote by $x_1x_2$.  The subgraph induced by $Y\cup \{x_1,x_2\}$ is a blue/green complete subgraph, from which it follows that $|Y|\le k-3$ and $|X|\ge 6$.  Since the subgraph induced by $X\cup \{y\}$, for any $y\in Y$, is a red/green complete subgraph, assume that $|X|\le 7$ and $|Y|\ge k+3-7=k-4$.  

If $|X|=6$ and $|Y|=k-3\ge 6$, then $r(K_3, K_3)=6$ \cite{GG} implies that $Y$ contains a green $K_3$.  This green $K_3$, along with $X$, forms a red/green $K_9$.  If $|X|=7$ and $|Y|=k-4\ge 5$, then $Y$ must contain a green edge, which we denote by $y_1y_2$.  The subgrpah induced by $X\cup \{y_1, y_2\}$ is a red/green $K_9$.  It follows that $gr^3_2(K_3, K_9, K_k)\le k+3$. 

\underline{Case 2:} Suppose that $12\le k\le 16$.  If we let $G\cong 2K_2$ (so that $\ell=2$ and $n=4$) in Lemma \ref{GallaiED} and note that $k\le r(K_3, K_5)+\ell =16$, then it follows that $gr^3_2(K_3, K_9, K_k)\ge k+2$.
To prove the reverse inequality, consider a Gallai $3$-coloring of $K_{k+2}$ that avoids a red/blue $K_3$, a red/green $K_9$, and a blue/green $K_k$.  Let $X$ be the set of vertices that are incident with red edges and let $Y$ be the vertices that are not incident with red edges.  Note that all edges joining vertices in $X$ to vertices in $Y$ are green, all red edges are contained in the subgraph induced by $X$, and all blue edges are contained in the subgraph induced by $Y$.  

Since the subgraph induced by $Y\cup \{x\}$, for any $x\in X$, is a complete blue/green subgraph, assume that $|Y|\le k-2$.  It follows that $|X|\ge 4$, so $X$ must contain at least one green edge, which we denote by $x_1x_2$.  The subgraph induced by $Y\cup \{x_1,x_2\}$ is a blue/green complete subgraph, from which it follows that $|Y|\le k-3$ and $|X|\ge 5$.  Since the subgraph induced by $X\cup \{y\}$, for any $y\in Y$, is a red/green complete subgraph, assume that $|X|\le 7$ and $|Y|\ge k+2-7=k-5$.

If $|X|=5$ and $|Y|=k+2-5=k-5\ge 9$, then $r(K_3, K_4)=9$ \cite{GG} implies that the subgraph induced by $Y$ contains a green $K_4$.  The vertices in this green $K_4$, along with $X$, form a red/green $K_9$.   If $6\le |X|\le 7$ and $k-5\le |Y|\le k-4$, then $|Y|\ge 7$ and $r(K_3, K_3)=6$ \cite{GG} implies that there exists a green $K_3$ in $Y$.  The vertices of this $K_3$, along with $X$, form a red/green $K_9$.  It follows that $gr^3_2(K_3, K_9, K_k)\le k+2$.

\underline{Case 3:} Suppose that $17\le k\le 24$.  If we let $G\cong K_2$ (so that $\ell=1$ and $n=2$) in Lemma \ref{GallaiED} and note that $k\le r(K_3, K_7)+\ell =24$, then it follows that $gr^3_2(K_3, K_9, K_k)\ge k+1$. 
To prove the reverse inequality, consider a Gallai $3$-coloring of $K_{k+1}$ that avoids a blue/green $K_k$.  Then there exists a red matching of size at least two (otherwise, it is possible to delete a single vertex that removes all red edges, leaving a blue/green $K_k$).  Since $k+1\ge r(K_3, K_5)+4=18$, Lemma \ref{GallaiKn} implies that there exists a red/blue $K_3$, a red/green $K_9$, or a blue/green $K_k$.  It follows that $gr^3_2(K_3, K_9, K_k)\le k+1$.

\underline{Case 4:} Suppose that $25\le k\le 35$.  Since $r(K_3, K_9)=36$, there exists a blue/green $K_{k-1}$ that avoids a blue $K_3$ (and hence, a red/blue $K_3$) and a green $K_9$ (and hence, a red/green $K_9$).   It follows that $gr^3_2(K_3, K_9, K_k)\ge k$.  To prove the reverse inequality, consider a Gallai $3$-coloring of $K_k$ that avoids a blue/green $K_k$ (so, there exists at least one red edge).   Since $$k\ge r(K_3, K_7)+2=25,$$ Lemma \ref{GallaiKn} implies that there exists a red/blue $K_3$, a red/green $K_9$, or a blue/green $K_k$.  It follows that $gr^3_2(K_3, K_9, K_k)\le k$.
%
\end{proof}

Note that in many of the proofs in this section, Theorem \ref{rededge} could have been used to prove the upper bounds across ranges of $k$ once the first value in the range was determined.  Since our upper bound proofs worked across each range for $k$, this approach was not necesary.  In the next section, Theorem \ref{rededge} will play a more significant role in the proofs of upper bounds.

\section{The Chromatic Gallai-Ramsey Number $gr^3_2(K_4, K_j, K_k)$}\label{main3}

The value $r^3_2(K_4, K_4, K_4)=10$ was determined in \cite{CL}.  The following theorem provides a generalization of the Gallai analogue of this result.







\begin{theorem}\label{thm44k}
If $k\ge 4$, then $$gr^3_2(K_4, K_4, K_k)=\begin{cases} k+1 & \mbox{if $4\le k\le 8$} \\ k & \mbox{if $9\le k\le 17$} \\ 18 & \mbox{if $k\ge 18$.} \end{cases}$$
\end{theorem}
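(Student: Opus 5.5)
We split into the three ranges of $k$ in the statement. For $k\ge 18$ nothing new is needed: $r(K_4,K_4)=18$ \cite{GG}, so Theorem \ref{upperbound} gives $gr^3_2(K_4,K_4,K_k)=18$. For $9\le k\le 17$, the lower bound mirrors the $K_3$ cases. Since $k-1\le 16<r(K_4,K_4)$, there is a blue/green $K_{k-1}$ with no blue $K_4$ and no green $K_4$. It therefore has no red/blue $K_4$, no red/green $K_4$, and (having only $k-1$ vertices) no blue/green $K_k$. Hence $gr^3_2(K_4,K_4,K_k)\ge k$.

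For the lower bound when $4\le k\le 8$, I would build a Gallai coloring of $K_k$ as follows. Start with a red edge $xy$. Choose disjoint sets $W$ and $Z$ with $|W|+|Z|=k-2$. Join both $x$ and $y$ to $W$ in green and to $Z$ in blue, so every triangle through $xy$ is non-rainbow. Color the inside of $W$ entirely blue, since a green edge there would give a red/green $K_4$ with $x,y$. Color the inside of $Z$ entirely green, since a blue edge there would give a red/blue $K_4$. This forces $|W|\le 3$ and $|Z|\le 3$. The $W$–$Z$ edges are then chosen blue or green, avoiding rainbow triangles, so that no red/blue $K_4$ or red/green $K_4$ appears inside $W\cup Z\cup\{x\}$ or $W\cup Z\cup\{y\}$. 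A blue/green clique cannot contain both $x$ and $y$, so it has order at most $|W|+|Z|+1=k-1$. For $k=4$ this is exactly the coloring with $W=\{w\}$, $Z=\{z\}$ and $wz$ green, which I have checked. For $k\le 8$ one needs $|W|,|Z|\le 3$ with a suitable cross coloring, which looks feasible.

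The upper bounds use Theorem \ref{rededge} as the engine, inducting on $k$. First I would verify the base cases directly: $gr^3_2(K_4,K_4,K_4)\le 5$, and $gr^3_2(K_4,K_4,K_9)\le 9$. For the inductive step, $gr^3_2(K_4,K_4,K_{k-1})\le p$ implies that every Gallai $3$-coloring of $K_{p+1}$ either contains one of the forbidden subgraphs or has every vertex incident with a red edge. I would then show that a red graph with no isolated vertices, together with the constraints "no red/blue $K_4$" and "no red/green $K_4$", forces a contradiction once the order is $k+1$ (for $k\le 8$) or $k$ (for $k\ge 9$).

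For that step I would take a Gallai partition as in Theorem \ref{Gallaistruct}, with base graph of minimal order, which by Lemma \ref{not3} has order $2$ or at least $4$. I would then case on the two base colors; at most two colors occur, by Lemma \ref{numcolors} applied to the base. If red is a base color, red edges between blocks combine with red/blue or red/green edges inside blocks, so two blocks each containing an edge of the other non-green (respectively non-blue) color force a red/blue (respectively red/green) $K_4$. This bounds the block sizes and their internal colorings. If red is not a base color, every red edge lies inside a block. Since every vertex sees red, every block has order at least $2$ and contains red, and the blue/green base joins let me combine one vertex per block with a red-free part to build a large blue/green clique. Lemma \ref{increaseorder} helps here.

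I expect the main obstacle to be the $9\le k\le 17$ upper bound, specifically the base case $k=9$. There the count is tight, and I must show directly that a red graph with no isolated vertex on $9$ vertices, in a Gallai coloring, creates a red/blue $K_4$, a red/green $K_4$, or a blue/green $K_9$. My first attempt would be to take a maximal red matching and imitate Lemma \ref{GallaiKn}, using $r(K_3,K_4)=9$ on the vertices outside the matching.
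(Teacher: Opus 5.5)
\textbf{Overall.} Your treatment of $k\ge 18$, of the lower bound for $9\le k\le 17$, and of the inductive steps for $5\le k\le 8$ and $10\le k\le 17$ is workable. For the inductive steps you take a different route from the paper. The paper uses the inductive hypothesis a second time to locate a blue/green $K_{k-1}$ and then examines the one or two leftover vertices. You use Theorem \ref{rededge} only to get that every vertex sees red, and then run a Gallai partition. That closes quickly:
\begin{itemize}
\item If red is not a base color, two blocks each contain a red edge, and those four vertices span a red/blue or red/green $K_4$.
\item A base of order at least $4$ that uses red gives such a $K_4$ directly.
\item A red base of order $2$ forces both blocks to be red-free. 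Then either one block is a single vertex and the other is a blue/green $K_{n-1}$ (the target itself when $n=k+1$, and containing a monochromatic triangle when $n-1\ge 6$), or both blocks are monochromatic of order at most $2$.
\end{itemize}
Your $W$/$Z$ construction is a valid substitute for the paper's figures when $4\le k\le 8$, once the cross edges are fixed. If $|W|=3$, each $z\in Z$ needs a green edge into $W$; if $|Z|=3$, each $w\in W$ needs a blue edge into $Z$. For $k=8$, a green perfect matching between $W$ and $Z$ with all other cross edges blue works.

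\textbf{The gap: the base case $k=9$.} This is where the theorem has real content, since the value drops from $k+1$ to $k$. You plan to prove it assuming the red graph on $9$ vertices has no isolated vertex, but nothing supplies this. Theorem \ref{rededge} would need $gr^3_2(K_4,K_4,K_8)\le 8$, whereas your own construction shows $gr^3_2(K_4,K_4,K_8)=9$. So a vertex that sees no red cannot simply be deleted.

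The difficult colorings are exactly those with few red edges, e.g.\ your $k=8$ coloring plus a ninth vertex joined by blue and green edges. Your plan never reaches them, while the case it does address is already settled by the partition argument above. The suggested tool also fails: at most $7$ vertices of $K_9$ lie outside a nonempty red matching, so $r(K_3,K_4)=9$ cannot be applied to them.

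What is needed is an argument for an arbitrary Gallai coloring of $K_9$ containing a red edge. The paper gives a full Gallai-partition analysis with no red-degree assumption. When the base has order $2$ it uses $gr^3_2(K_3,K_4,K_j)=j$ for $j=6,7,8$ together with $r(K_3,K_3)=6$; when the base is blue/green of order at least $4$ it uses pigeonhole around a single red edge.

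A shorter fix is a neighborhood argument. Take $x$ incident with red.
\begin{itemize}
\item Since $N_r(x)\neq\emptyset$, $N_b(x)$ induces a green clique and $N_g(x)$ a blue clique, each of order at most $3$. Hence $|N_r(x)|\ge 2$.
\item Then $N_b(x)$ and $N_g(x)$ cannot both be nonempty, because an edge of $N_r(x)$ would have to be both green and blue. Hence $|N_r(x)|\ge 5$.
\item So $N_r(x)$ is either a green or blue clique on at least $5$ vertices, or has all $8$ vertices. In the latter case, if every triangle used both blue and green, there would be no red edge and no monochromatic triangle, contradicting $r(K_3,K_3)=6$.
\item Either way $N_r(x)$ contains a red/blue or red/green triangle, and with $x$ this gives a forbidden $K_4$.
\end{itemize}
Your base case $k=4$ is also only asserted, though that is a routine check on $K_5$.
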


\begin{proof} Since $r(K_4, K_4)=18$ \cite{GG}, Theorem \ref{upperbound} implies that $$gr^3_2(K_4, K_4, K_k)=18 \quad \mbox{if $k\ge 18$.}$$ For $k<18$, we separate the proof into the following cases.

\underline{Case 1:} Suppose that $k=4$.  The Gallai $3$-coloring of $K_4$ shown in Figure \ref{r44k1} avoids a red/blue $K_4$, a red/green $K_4$, and a blue/green $K_4$.  
\begin{figure}[h!]
\centering
\begin{subfigure}{.29\textwidth}
    \centering
    \includegraphics[width=1\linewidth]{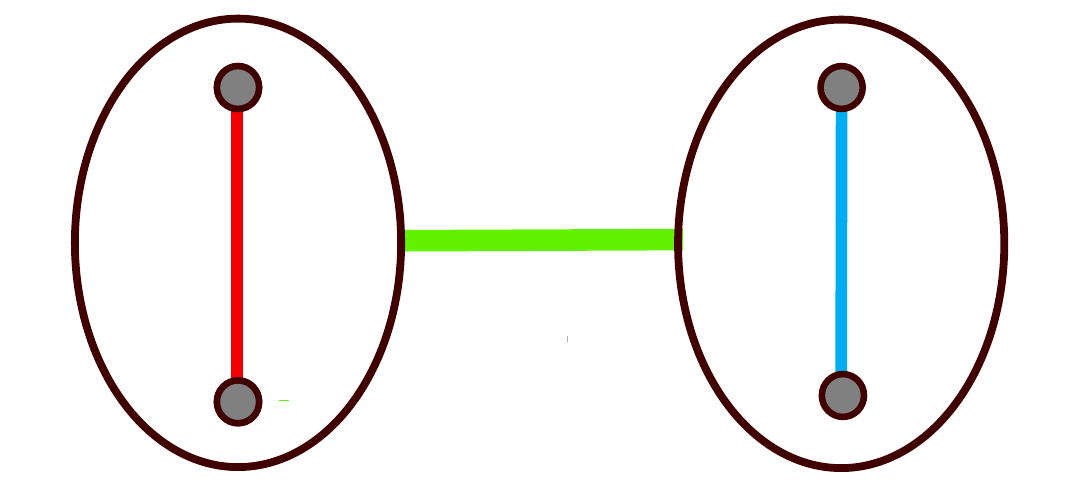}  
    \caption{}
    \label{r44k1}
\end{subfigure}\quad \ 
\begin{subfigure}{.29\textwidth}
    \centering
    \includegraphics[width=1\linewidth]{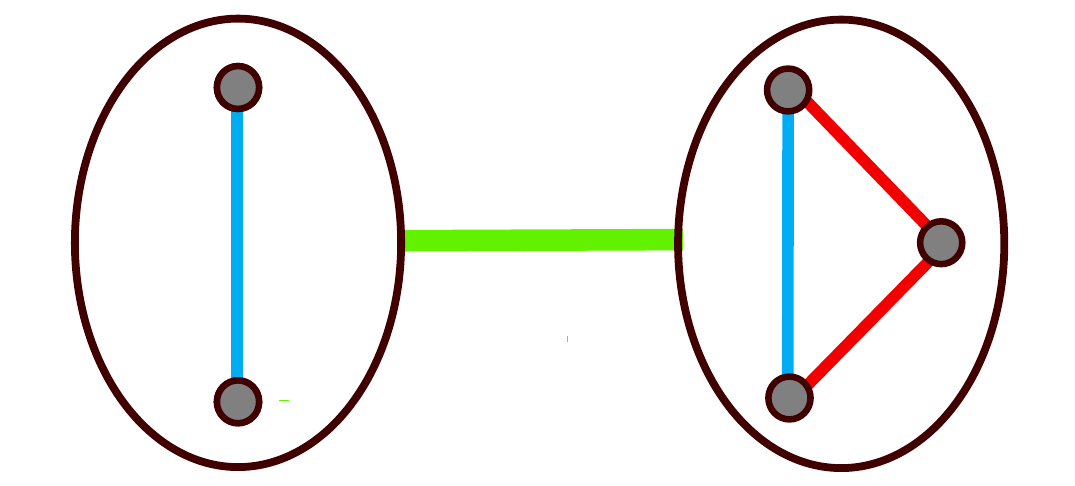}  
    \caption{}
    \label{r44k2}
\end{subfigure}\quad \ 
\begin{subfigure}{.29\textwidth}
    \centering
    \includegraphics[width=1\linewidth]{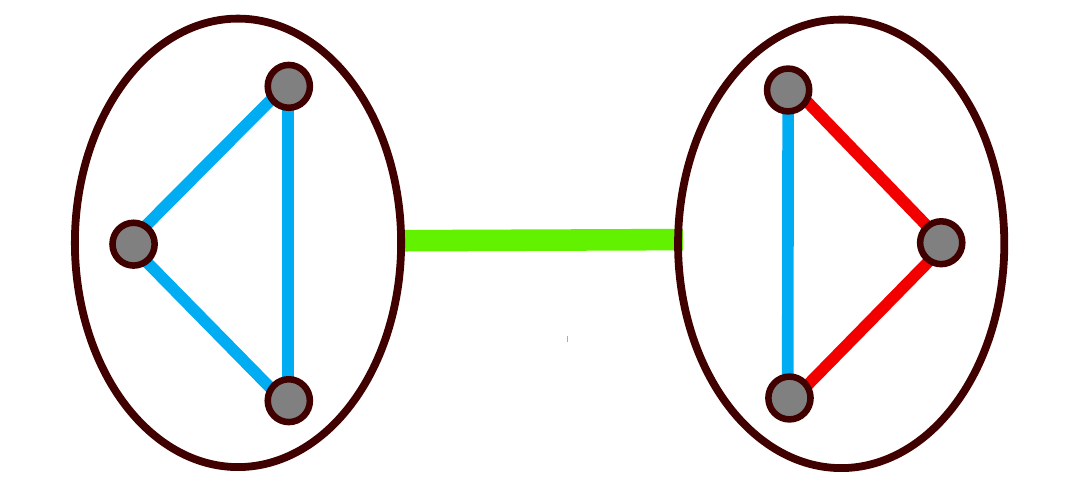}  
    \caption{}
    \label{r44k3}
\end{subfigure}\\
\begin{subfigure}{.28\textwidth}
    \centering
    \includegraphics[width=1\linewidth]{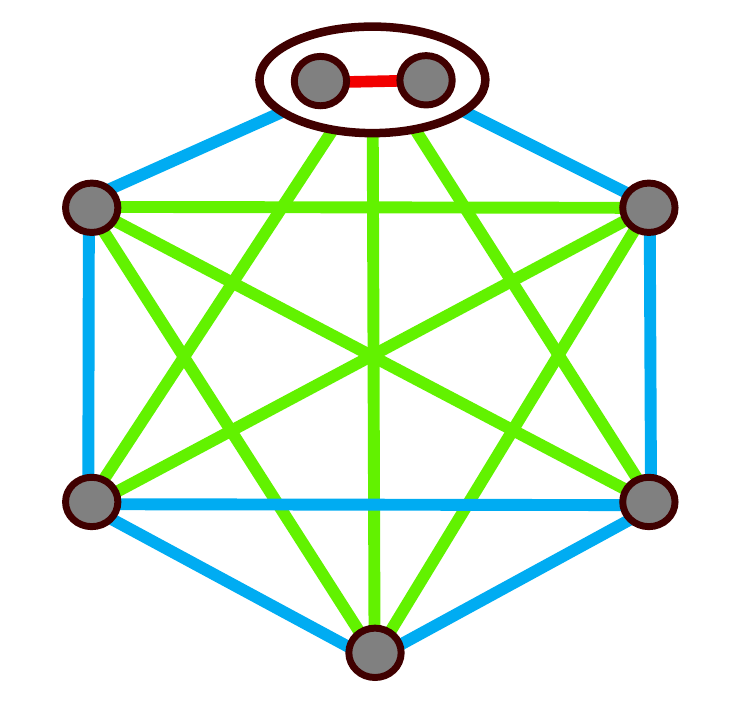}  
    \caption{}
    \label{r44k4}
\end{subfigure} \qquad
\begin{subfigure}{.29\textwidth}
    \centering
    \includegraphics[width=1\linewidth]{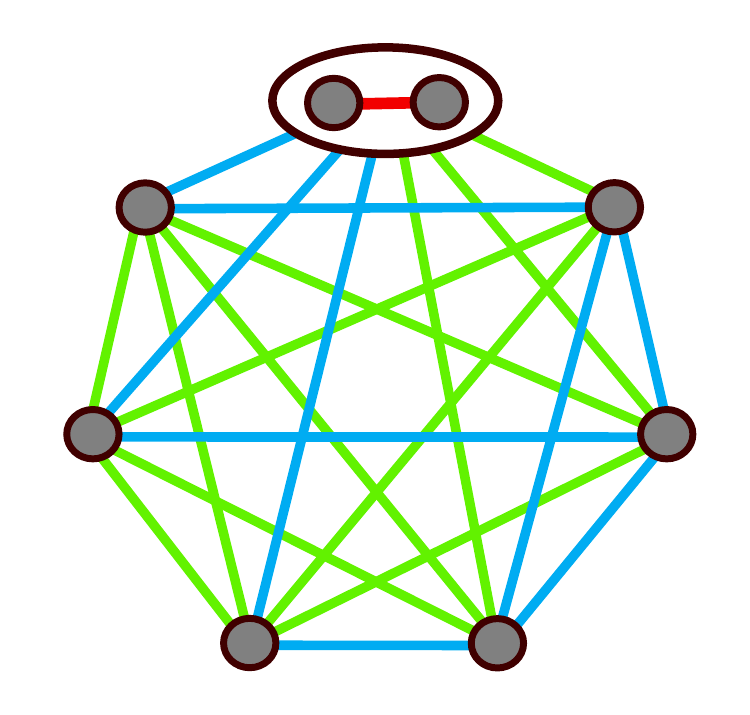}  
    \caption{}
    \label{r44k5}
\end{subfigure}
\caption{Critical colorings for $gr^3_2(K_4, K_4, K_k)$, when $4\le k\le 8$.}\label{r44k}
\end{figure}
It follows that $gr^3_2(K_4, K_4, K_4)\ge 5$.  To prove the reverse inequality, consider a Gallai $3$-coloring of $K_5$ and let $\mathcal{B}$ be the base graph, chosen to have minimal order.  If a $K_4$-subgraph whose edges use at most two colors is avoided, then Theorem \ref{Gallaistruct} and Lemma \ref{not3} imply that $|V(\mathcal{B})|=2$.   Suppose that the edges joining the two blocks are red.   We must consider two subcases.

\underline{Subcase 1.1:} Assume that one block contains a single vertex, which we denote by $x$,  and the other block has order $4$, with vertex set $Y$.  Then Lemma \ref{increaseorder} implies that the subgraph induced by $Y$ does not contain any red edges (as such an edge, along with $x$, would form a red $K_3$, resulting in a red/blue $K_4$ or a red/green $K_4$).  So, the subgraph induced by $Y$ forms a blue/green $K_4$.

\underline{Subcase 1.2:} Assume that one block has order $2$ and the other block has order $3$.   Denote the vertex sets for the blocks by $X=\{x_1, x_2\}$ and $y=\{y_1, y_2, y_3\}$.  Once again, we note that the subgraphs induced by $X$ and $Y$ cannot contain any red edges.  Without loss of generality, suppose that $x_1x_2$ is blue.  If any edge in $Y$ is blue (say, $y_1y_2$ is blue), then the subgraph iniduced by $\{x_1, x_2, y_1, y_2\}$ is a red/blue $K_4$.  So, all of the edges in $Y$ must be green, and the subgraph induced by $\{x_1, y_1, y_2, y_3\}$ is a red/green $K_4$.  In all cases, there exists a $K_4$-subgraph whose edges use at most two colors.  It follows that $gr^3_2(K_4, K_4, K_4)=5$.

\underline{Case 2:} Suppose that $5\le k\le 8$.  The Gallai $3$-colorings of $K_{k}$ shown in Figures \ref{r44k2}, \ref{r44k3}, \ref{r44k4}, and \ref{r44k5} provide the lower bounds $gr^3_2(K_4, K_4, K_k)\ge k+1$ for $k=5, 6, 7,$ and $8$, respectively.  The reverse inequality follows from induction on $4\le k\le 8$, with the base case corresponding to Case 1 of this proof.  Suppose that $gr^3_2(K_4, K_4, K_{k-1})\le k$ and consider a Gallai $3$-coloring of $K_{k+1}$ that avoids a red/blue $K_4$ and a red/green $K_4$.  By Theorem \ref{rededge}, every vertex is incident with a red edge, or there exists a red/blue $K_4$, a red/green $K_4$, or a blue/green $K_k$.  Assume the former case and also note that by the inductive hypothesis, there exists a blue/green $K_{k-1}$, whose vertex set we denote by $X:=\{x_1,x_2,\dots , x_{k-1}\}$.  Denote the other two vertices by $y$ and $z$.  

By the inductive hypothesis, the subgraph induced by  $\{x_2, \dots , x_{k-1}, y,z\}$ must contain a blue/green $K_{k-1}$, which must necessarily include one of $y$ or $z$.  Without loss of generality, suppose that $\{x_2, \dots , x_{k-1}, y\}$ induces a blue/green $K_{k-1}$.  If the subgraph induced by $\{x_1, x_2, \dots , x_{k-1},y\}$ is not a blue/green $K_k$, then $x_1y$ must be red and $y$ joins to $\{x_1, x_2, \dots , x_{k-1}\}$ with only blue and green edges.  It follows that $x_2z, \dots , x_{k-1}z$ are all red edges.
Without loss of generality suppose that $yz$ is red or blue.  If $yz$ is red, then depending on the color of $x_1x_2$ (which is blue or green), the subgraph induced by $\{x_1,x_2,y,z\}$ is a red/blue or red/green $K_4$.  So, suppose that $yz$ is blue.  If $x_1x_2$ is also blue, then the subgraph induced by $\{x_1,x_2,y,z\}$ is a red/blue $K_4$.  So, assume that $x_1x_2$ is green.  Then $x_1y$ must be red, contradicting the fact that $\{x_2, \dots , x_{k-1}, y\}$ induces a blue/green $K_{k-1}$.  It follows that $gr^3_2(K_4, K_4, K_k)\le k+1$.

\underline{Case 3:} Suppose that $k=9$.  Since $r(K_4, K_4)=18$ \cite{GG}, there exists a blue/green $K_{8}$ that avoids a blue $K_4$ (and hence, a red/blue $K_4$) and a green $K_4$ (and hence, a red/green $K_4$).   As there are not enough vertices to form a blue/green $K_9$, it follows that $gr^3_2(K_4, K_4, K_9)\ge 9$.  To prove the reverse inequality, consider a Gallai $3$-coloring of $K_9$ that avoids a red/blue $K_4$, a red/green $K_4$, and a blue/green $K_9$.  Let $\mathcal{B}$ be the base graph, chosen to have minimal order.  Theorem \ref{Gallaistruct} and Lemma \ref{not3} imply that $2\le |V(\mathcal{B})|\le 8$ and $|V(\mathcal{B})|\ne 3$.  Consider the following subcases.

\underline{Subcase 3.1:} Suppose that $|V(\mathcal{B})|=2$.  If the edge in $\mathcal{B}$ is red, then assume that neither block contains a red edge, as this would result in a red $K_3$, and hence, a red/blue $K_4$ or a red/green $K_4$ by Lemma \ref{increaseorder}.  There are two subsubcases to consider.

\underline{Subsubcase 3.1.1:} Suppose that one block in $\mathcal{B}$ contains only one vertex, which we denote by $x$.   Let $Y$ be the vertex set for the other block.  If the edge in $\mathcal{B}$ is red, then $r(K_3, K_3)=6$ \cite{GG} implies that the subgraph induced by $Y$ contains a red/blue $K_3$ or a green $K_3$.  Such a subgraph, along with $x$, forms a red/blue $K_4$ or a red/green $K_4$, respectively.  So, assume that the edge in $\mathcal{B}$ is blue or green (without loss of generality, suppose it is blue).  By Theorem \ref{gr34k}, $gr^3_2(K_3, K_4, K_8)=8$, from which it follows that the subgraph induced by $Y$ contains a red/blue $K_3$, a red/green $K_4$, or a blue/green $K_8$.  In the first case, including $x$ results in a red/green $K_4$, and in the last case, including $x$ results in a blue/green $K_9$. 

\underline{Subsubcase 3.1.2:} Suppose that both blocks have order at least $2$.  If the edge in $\mathcal{B}$ is red, then no red edge exists in either block as such an edge would result in a red $K_3$, and by Lemma \ref{increaseorder}, there would exist a red/blue $K_4$ or a red/green $K_4$.  If both blocks contain a blue edge or if both blocks contain a green edge, then such a pair of edges results in a red/blue $K_4$ or a red/green $K_4$, respectively.  Hence, one block must form a red/blue complete graph and the other must form a red/green complete graph.  By the pigeonhole principle, one block contains at least five vertices forming a blue $K_5$ or a green $K_5$.

If the edge in $\mathcal{B}$ is blue or green (assume it is blue), then at most one of the blocks can contain a red or a blue edge; otherwise, a red/blue $K_4$ can be formed.  Hence, one block is a complete green graph., which has order $2$ or $3$.  If its order is $2$, then the other block has order $7$, and by Theorem \ref{gr34k}, $gr^3_2(K_3, K_4, K_7)=7$.  So, it contains a red/blue $K_3$, a red/green $K_4$, or a blue/green $K_7$.  In the first case, including one vertex from the smaller block results in a red/blue $K_4$.  In the last case, including both of the vertices from the smaller block results in a blue/green $K_9$.  If the complete green block has order $3$, then the other block has order $6$, and by Theorem \ref{gr34k}, $gr^3_2(K_3, K_4, K_6)=6$.  So, it contains a red/blue $K_3$, a red/green $K_4$, or a blue/green $K_6$.  In the first case, including one vertex from the smaller block results in a red/blue $K_4$.  In the last case, including all three of the vertices from the smaller block results in a blue/green $K_9$. 

\underline{Subcase 3.2:} Suppose that $4\le |V(\mathcal{B})|\le 8$.  If the edges in $\mathcal{B}$ are red/blue or red/green, then a red/blue $K_4$ or a red/green $K_4$ exists.  So, assume that $\mathcal{B}$ is a blue/green complete graph.  Also, note that every vertex in $\mathcal{B}$ is incident with both a blue edge and a green edge; otherwise, the Gallai coloring can be formed with a base graph of order $2$.  Some block must contain a red edge, or the Gallai coloring is a blue/green $K_k$.  Also, at most one block contains a red edge; otherwise, a red/blue $K_4$ or a red/green $K_4$ can be formed.  Let $xy$ be the unique red edge and denote the other seven vertices by $z_1,z_2,\dots , z_7$.  Since rainbow triangles are avoided, $xz_i$ and $yz_i$ are both blue or both green, for each $i\in \{1, 2, \dots , 7\}$.  By the pigeonhole principle, at least four of the vertices in $\{z_1, z_2, \dots , z_7\}$ join to $x$ and $y$ with the same color.  Without loss of generality, suppose that $xz_1$, $yz_1$, $xz_2$, $yz_2$, $xz_3$, $yz_3$, $xz_4$, and $yz_4$ are all blue.  If any edge in the subgraph induced by $\{z_1,z_2,z_3,z_4\}$ is blue, then that edge, along with $x$ and $y$, forms a red/blue $K_4$.  Thus, $\{z_1,z_2,z_3,z_4\}$ induces a green $K_4$.

In both subcases, we find that there exists a red/blue $K_4$, a red/green $K_4$, or a blue/green $K_9$.  It follows that $gr^3_2(K_4, K_4, K_9)\le 9$.





\underline{Case 4:} Suppose that $10\le k\le 17$.  Since $r(K_4, K_4)=18$ \cite{GG}, there exists a blue/green $K_{k-1}$ that avoids a blue $K_4$ (and hence, a red/blue $K_4$) and a green $K_4$ (and hence, a red/green $K_4$).   As there are not enough vertices to form a blue/green $K_k$, it follows that $gr^3_2(K_4, K_4, K_k)\ge k$.  The reverse inequality follows from induction on $9\le k\le 17$, with the base case corresponding to Case 3 of this proof.  Suppose that $gr^3_2(K_4, K_4, K_{k-1})\le k-1$ and consider a Gallai $3$-coloring of $K_{k}$ that avoids a red/blue $K_4$ and a red/green $K_4$.  By Theorem \ref{rededge}, every vertex is incident with a red edge, or there exists a red/blue $K_4$, a red/green $K_4$, or a blue/green $K_k$.   Assume the former case and also note that by the inductive hypothesis, there exists a blue/green $K_{k-1}$, whose vertex set we denote by $X:=\{x_1,x_2,\dots , x_{k-1}\}$.  Denote the other vertex by $y$.  The only way for every vertex to be incident with a red edge is if $yx_i$ is red for all $i\in \{1, 2, \dots , k-1\}$.  The subgraph induced by $\{x_1, x_2, \dots , x_{k-1}\}$ is a blue/green complete graph of order $k-1\ge 9$, so it must contain a blue $K_3$ or a green $K_3$ since $r(K_3, K_3)=6$ \cite{GG}.  Such a $K_3$-subgraph, along with $y$, forms a red/blue $K_4$ or a red/green $K_4$, respectively.  It follows that $gr^3_2(K_4, K_4, K_k)\le k$.
\end{proof}


\begin{theorem}
If $k\ge 5$, then $$gr^3_2(K_4, K_5, K_k)=\begin{cases} k+2 & \mbox{if $5\le k\le 9$} \\ k+1 & \mbox{if $10\le k\le 14$} \\ k & \mbox{if $15\le k\le 24$} \\ 25 & \mbox{if $k\ge 25$.} \end{cases}$$
\end{theorem}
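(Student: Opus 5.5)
We split the proof into the four ranges of $k$ and handle each with the tools already used for $gr^3_2(K_4,K_4,K_k)$. For $k\ge 25$, Theorem \ref{upperbound} and $r(K_4,K_5)=25$ \cite{Ka1, MR4} give the value $25$ directly. For $15\le k\le 24$, the lower bound comes from a critical blue/green coloring of $K_{k-1}$ for $r(K_4,K_5)$: it has no blue $K_4$, no green $K_5$, and too few vertices for a blue/green $K_k$.

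\textbf{Upper bound for $15\le k\le 24$.} We induct on $k$ with Theorem \ref{rededge}. Suppose $gr^3_2(K_4,K_5,K_{k-1})\le k-1$. In a Gallai $3$-coloring of $K_k$ with no bad subgraph, every vertex meets a red edge. By induction there is a blue/green $K_{k-1}$ on a set $X$, and the remaining vertex $y$ must join to all of $X$ in red. Since $|X|\ge 14=r(K_3,K_5)$, the set $X$ contains a blue $K_3$ or a green $K_5$. With $y$, a blue $K_3$ gives a red/blue $K_4$, and a green $K_5$ is already a red/green $K_5$. The base case $k=15$ follows the same pattern but starts from the previous range's value $gr^3_2(K_4,K_5,K_{14})\le 15$. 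So for $k=15$ I would instead prove $\le 15$ directly, by a Gallai-partition argument like Case 3 of Theorem \ref{thm44k}, using $gr^3_2(K_3,K_5,K_m)$ and $gr^3_2(K_3,K_4,K_m)$ to handle blocks.

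\textbf{Upper bounds for $5\le k\le 14$.} The same induction with two leftover vertices $y,z$ (respectively three) gives $\le k+1$ for $10\le k\le 14$ (respectively $\le k+2$ for $5\le k\le 9$), as in Case 2 of Theorem \ref{thm44k}. Each new vertex must carry red edges into the blue/green clique. A red star from one vertex into at least $9$ clique vertices forces a red/blue $K_4$ or a red/green $K_5$, because $r(K_3,K_4)=9$. With more leftover vertices, I would combine this with the observation that any red edge is part of a red $K_3$, or extends by Lemma \ref{increaseorder}. The base cases $k=5$ (value $7$) and $k=10$ (value $11$) need direct Gallai-partition analyses. The base graph is blue/green, or of order $2$ with a red edge; use Lemma \ref{not3} and the values $gr^3_2(K_3,K_4,\cdot)$ and $gr^3_2(K_3,K_5,\cdot)$ for the blocks.

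\textbf{Lower bounds for $5\le k\le 14$.} We need colorings of $K_{k+1}$ for $5\le k\le 9$ and of $K_k$ for $10\le k\le 14$. I would adapt Lemma \ref{GallaiED}: take a red/green graph $\mathcal{G}$ whose red part is triangle-free with small independence number (such as $K_2$ or $C_5$), join it in green to a blue/green graph $\mathcal{H}$ with no blue $K_3$ and no green $K_{5-|\mathcal{G}|}$, and add enough blue so that red/blue cliques stay below $4$. For $10\le k\le 14$, putting $G=K_2$ next to a critical $(K_3,K_4)$-type blue/green graph of order $k-2$ should give order $k$. For the small range, explicit constructions like those in Figure \ref{r44k} are needed. \textbf{The main obstacle} is these small-$k$ critical colorings, together with the base-case partition analyses at $k=5,10,15$, since the counting must exactly match the claimed thresholds.
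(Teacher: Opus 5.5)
Your proposal has genuine gaps. Of your outline, only these parts go through:
\begin{itemize}
\item the range $k\ge 25$;
\item the lower bound for $15\le k\le 24$;
\item the lower bound at $k=10$;
\item the induction step for $16\le k\le 24$.
\end{itemize}

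\textbf{Upper bounds.} The inductive step for $11\le k\le 14$ fails as described. You have two leftover vertices $y,z$ and a blue/green clique $X$ of order $k-1\in\{10,\dots,13\}$. The red edges leaving $X$ are split between $y$ and $z$, so pigeonhole only guarantees $\lceil (k-1)/2\rceil\le 7<9$ red neighbors for one of them. Fewer than $9$ vertices need not contain a blue $K_3$ or green $K_4$, so your ``red star into $9$ clique vertices'' is never obtained. The missing idea is to use the absence of rainbow triangles, as the paper does in its Case~4:
\begin{itemize}
\item Apply $r(K_3,K_4)=9$ to $X$ itself to get a blue $K_3$ or green $K_4$; call it $T$.
\item Some $t\in T$ has a red edge to, say, $y$.
\item Then every edge from $y$ to $T$ is red or has the color of $T$, so $T\cup\{y\}$ is a red/blue $K_4$ or a red/green $K_5$. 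A single red edge suffices.
\end{itemize}
For $6\le k\le 9$ even this fails, because $|X|\le 8$. Your fallback (a red edge lies in a red $K_3$ or extends via Lemma~\ref{increaseorder}) only produces $2$-colored triangles. There the paper needs a full Gallai-partition analysis, over minimal base graphs of order $2$, $4$, and $5$ through $8$, with every vertex on a red edge by Theorem~\ref{rededge}.

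The base cases $k=5,10,15$ are left entirely open. Your description of the minimal base graph (``blue/green, or of order $2$ with a red edge'') also omits red/green base graphs of order $4$, which the paper must treat at $k=5$. For $15\le k\le 24$ the paper uses no induction at all. It argues directly from the red, blue and green neighborhoods of a vertex $x$ on a red edge. The no-rainbow condition makes the edges from $N_r(x)$ to $N_b(x)$ red/blue and from $N_r(x)$ to $N_g(x)$ red/green, and the count then finishes with pigeonhole and $r(K_3,K_4)=9$. The case $k=10$ is handled by a similar neighborhood argument.

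\textbf{Lower bounds.} Your construction for $11\le k\le 14$ cannot work. Suppose a red $K_2$ is joined entirely in green to a blue/green $\mathcal{H}$. Then a green $K_3$ in $\mathcal{H}$ gives a red/green $K_5$, and a blue $K_4$ in $\mathcal{H}$ is a red/blue $K_4$. Hence $|\mathcal{H}|\le r(K_4,K_3)-1=8$ and the total order is at most $10$. There is no ``critical $(K_3,K_4)$-type'' graph of order $k-2\ge 9$.

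The paper instead keeps the $10$-vertex coloring and adds a green $K_{k-10}$ on $y_1,\dots,y_{k-10}$. This is joined to the $8$-vertex block $x_1,\dots,x_8$ by a parity rule: $x_iy_j$ is blue when $i\equiv j \pmod 2$ and green otherwise. The blue edges between the parts are essential.

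Finally, the critical colorings of $K_{k+1}$ for $5\le k\le 9$, which you flag as the main obstacle, are not supplied. So the lower bounds $k+2$ in that range remain unproved.
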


\begin{proof} Since $r(K_4, K_5)=25$ \cite{Ka1, MR4}, Theorem \ref{upperbound} implies that $$gr^3_2(K_4, K_5, K_k)=25 \quad \mbox{if $k\ge 25$.}$$ For $k<25$, we separate the proof into the following cases.

\underline{Case 1:} Suppose that $k=5$.  The Gallai $3$-coloring of $K_6$ shown in Figure \ref{r45k1} avoids a red/blue $K_4$, a red/green $K_5$, and a blue/green $K_5$.  
\begin{figure}[h!]
\centering
\begin{subfigure}{.26\textwidth}
    \centering
    \includegraphics[width=1\linewidth]{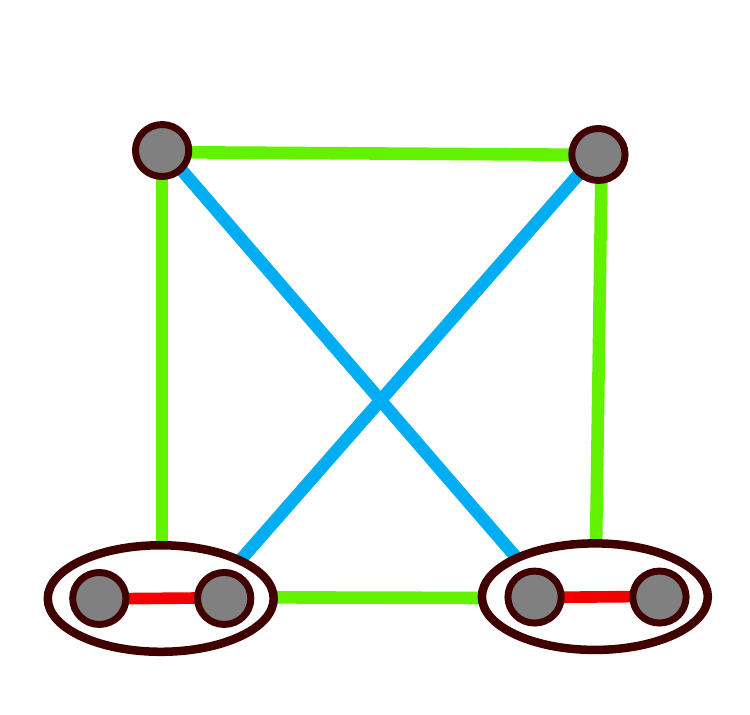}  
    \caption{}
    \label{r45k1}
\end{subfigure}\qquad
\begin{subfigure}{.27\textwidth}
    \centering
    \includegraphics[width=1\linewidth]{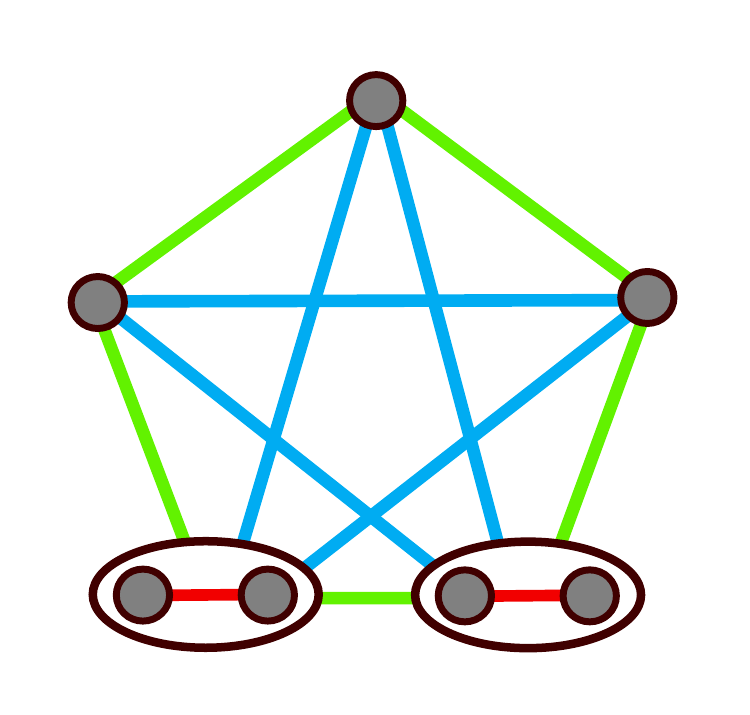}  
    \caption{}
    \label{r45k2}
\end{subfigure}\qquad 
\begin{subfigure}{.27\textwidth}
    \centering
    \includegraphics[width=1\linewidth]{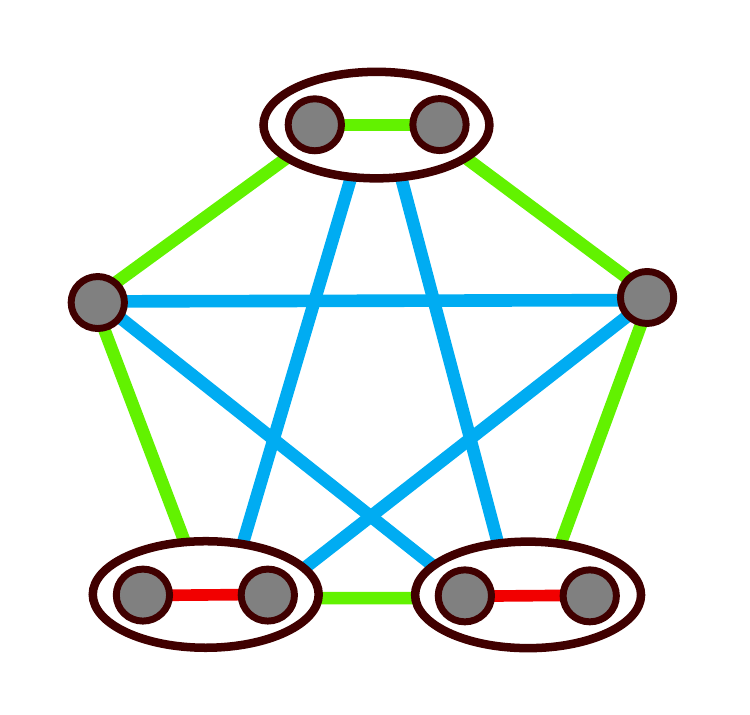}  
    \caption{}
    \label{r45k3}
\end{subfigure}\\
\begin{subfigure}{.27\textwidth}
    \centering
    \includegraphics[width=1\linewidth]{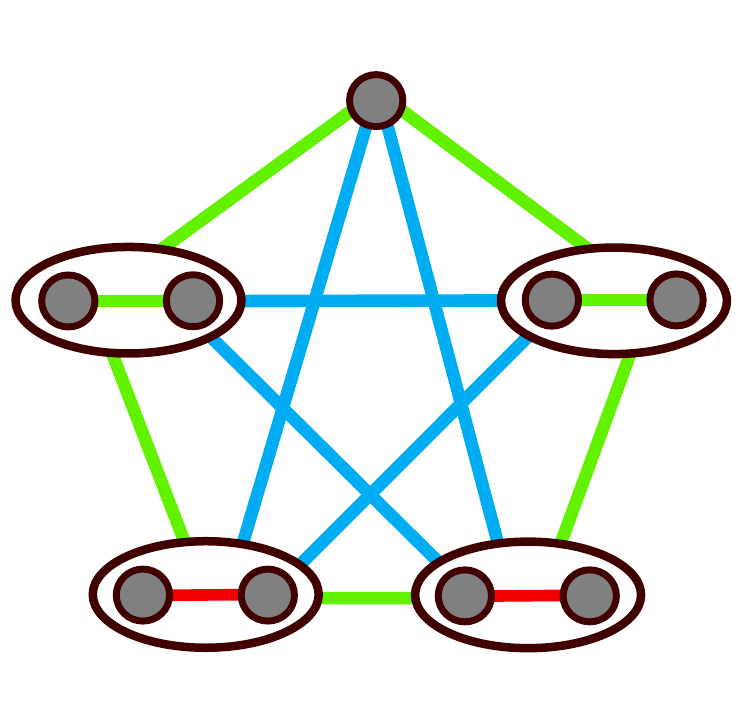}  
    \caption{}
    \label{r45k4}
\end{subfigure} \qquad
\begin{subfigure}{.27\textwidth}
    \centering
    \includegraphics[width=1\linewidth]{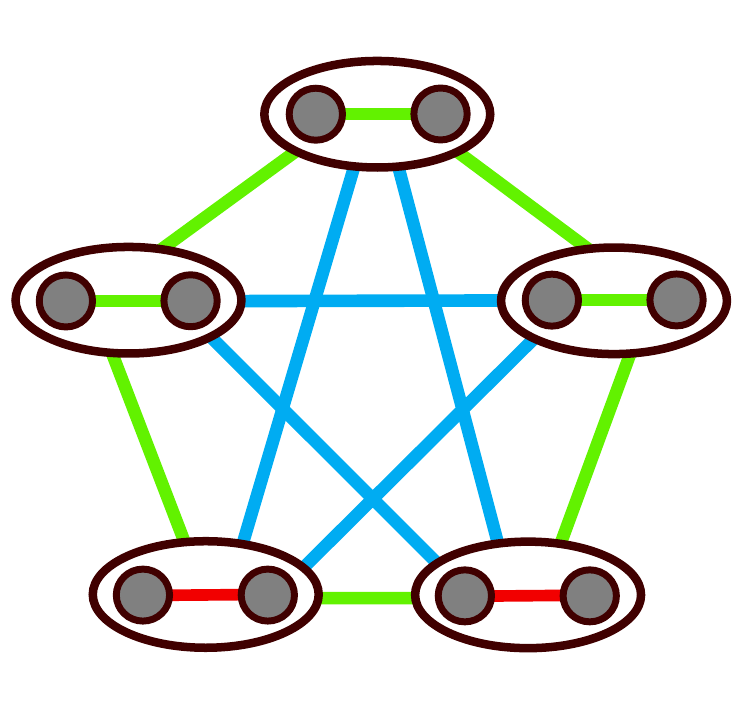}  
    \caption{}
    \label{r45k5}
\end{subfigure}
\caption{Critical colorings for $gr^3_2(K_4, K_5, K_k)$, when $5\le k\le 9$.}\label{r45k}
\end{figure}
It follows that $gr^3_2(K_4, K_5, K_5)\ge 7$.  To prove the reverse inequality, consider a Gallai $3$-coloring of $K_7$, and let $\mathcal{B}$ be the base graph, chosen to have minimal order.  By Theorem \ref{Gallaistruct} and Lemma \ref{not3}, $|V(\mathcal{B})|\in \{2, 4\}$.  

\underline{Subcase 1.1:}  Suppose that $|V(\mathcal{B})|=2$.  First, consider the possibility that the edge in $\mathcal{B}$ is red or blue (without loss of generality, suppose that it is red).  
By Theorem \ref{gr34k}, $gr^3_2(K_3, K_4, K_5)=5$.  So, if a block has order at least $5$, then it contains a red/blue $K_3$, a red/green $K_4$, or a blue/green $K_5$.  In the first case, including a single vertex from the smaller block results in a red/blue $K_4$.  In the second case, including a single vertex from the smaller block results in a red/green $K_5$.  Thus, assume that one block has order $4$ and the other block has order $3$, and we denote their vertex sets by $X$ and $Y$, respectively.  If the subgraph induced by $Y$ is a red/blue $K_3$, then including a vertex from $X$ results in a red/blue $K_4$.  If the subgraph induced by $Y$ is a red/green $K_3$, then if any red or blue edge exists in the subgraph induced by $X$, that edge, along with the vertices in $Y$, forms a red/green $K_5$.  So, assume that the subgraph induced by $Y$ is a blue/green $K_3$.  If the subgraph induced by $Y$ contains a blue edge, then the subgraph induced by $X$ cannot contain a red or blue edge without forming a red/blue $K_4$.  So, assume that the subgraph induced by $Y$ is a green $K_3$.  If the subgraph induced by $X$ contains a red or green edge, then such an edge, along with $Y$, forms a red/green $K_5$.  So, assume that all of the edges in the subgraph induced by $X$ are blue, forming a blue $K_4$.

Now consider the case where the edge in $\mathcal{B}$ is green.  By Theorem \ref{thm44k}, $gr^3_2(K_4, K_4, K_4)=5$.  So, if a block has order at least $5$, then it contains a $K_4$ whose edges use at most two colors.  If such a $K_4$ is red/green or blue/green, then including a single vertex from the other block results in a red/green $K_5$ or a blue/ green $K_5$, respectively.  The only other possibility is that it contains a red/blue $K_4$.  Thus, assume that one block has order $4$ and the other block has order $3$, and we denote their vertex sets by $X$ and $Y$, respectively.  
If $X$ contains a red/green $K_3$, then $Y$ cannot contain a red or green edge without resulting in a red/green $K_5$.  So, suppose that $Y$ induces a blue $K_3$.  Then if $X$ contains a blue or green edge, that edge, along with $Y$, forms a blue/green $K_5$.  So, $X$ must induce a red $K_4$.  
If $X$ contains a blue/green $K_3$, then $Y$ cannot contain a blue or green edge without resulting in a blue/green $K_5$.  So, suppose that $Y$ induces a red $K_3$.  Then if $X$ contains a red or green edge, that edge, along with $Y$, forms a red/green $K_5$.  So, $X$ must induce a blue $K_4$.
Finally, consider the case where $X$ does not contain a red/green $K_3$ or a blue/green $K_3$.  Then the subgraph induced by $X$ must be a red/blue $K_4$.  



\underline{Subcase 1.2:} Suppose that $|V(\mathcal{B})|=4$.  If the edges in $\mathcal{B}$ are red and blue, then a red/blue $K_4$ can be formed by selecting a single vertex from each block.  So, assume that the edges in $\mathcal{B}$ are red/green or blue/green (without loss of generality, assume that they are red/green).  If any block contains a red or green edge, then that edge, along with a single vertex from each of the other blocks, forms a red/green $K_5$.  So the edges in all of the blocks must be blue.  Note that every vertex in $\mathcal{B}$ is incident with both red and green edges, otherwise, the Gallai $3$-coloring corresponds with a base graph of order $2$, contradicting the minimality of the order of $\mathcal{B}$.  If any block has order at least three, then selecting a single vertex from a block that is red-adjacent to the block of order at least $3$ results in a red/blue $K_4$.  So, three blocks have order $2$ and one block has order $1$.  Avoiding a red/blue $K_4$ also forces the blocks of order $2$ to all be green-adjacent, forming a blue/green $K_6$.  

In both subcases, we find that there exists a red/blue $K_4$, a red/green $K_5$, or a blue/green $K_5$.  It follows that $gr^3_2(K_4, K_5, K_5)\le 7$.



\underline{Case 2:} Suppose that $6\le k\le 9$.  The Gallai $3$-colorings of $K_{k+1}$ given in Figures \ref{r45k2}, \ref{r45k3}, \ref{r45k4}, and \ref{r45k5} imply the lower bound $$gr^3_2(K_4, K_5, K_k)\ge k+2$$ when $k=6$, $7$, $8$, and $9$, respectively.  In each case, the colorings avoid a red/blue $K_4$, a red/green $K_5$, and a blue/green $K_k$. 

To prove the reverse inequality, we use induction on $5\le k\le 9$, with Case 1 of this proof serving as the base case.  For $6\le k\le 9$, assume that $gr^3_2(K_4, K_5, K_{k-1})\le k+1$, and consider a Gallai $3$-coloring of $K_{k+2}$ that avoids a red/blue $K_4$ and a red/green $K_5$.  By the inductive hypothesis, there exists a blue/green $K_{k-1}$.  Also, by Theorem \ref{rededge}, every vertex must be incident with a red edge.  Let $\mathcal{B}$ be the base graph for this Gallai $3$-coloring of $K_{k+2}$, chosen to have minimal order.  By Theorem \ref{Gallaistruct} and Lemma \ref{not3}, if a blue/green $K_k$  is avoided, then $|V(\mathcal{B})|\in \{2, 4, 5,\dots , k-1\}$.

\underline{Subcase 2.1:} Suppose that $|V(\mathcal{B})|=2$.   First, consider the possibility that the edge in $\mathcal{B}$ is red.  Then it is not possibile for both blocks to contain a red or blue edge as two such edges would produce a red/blue $K_4$.  Denote the vertex sets for the blocks by $X$ and $Y$, and assume that $Y$ does not contains any red or blue edges.  If $|Y|\ge 4$, then selecting $x\in X$, the subgraph induced by $\{x\}\cup Y$ is a red/green $K_5$.  So, assume that $|Y|\le 3$.  If $|Y|=3$, then if $X$ contains a red or green edge, that edge, along with $Y$, forms a red/green $K_5$.  If $X$ does not contain any red or green edges, then the subgraph induced by $X$ is a blue $K_{k-1}$, with $k-1\ge 5$.  If $|Y|=2$, then $|X|=k\ge 6$.  Since $r(K_3, K_3)=6$ \cite{GG}, the subgraph induced by $X$ contains a red/green $K_3$ or a blue $K_3$.  In the former case, the red/green $K_3$, along with the vertices in $Y$, forms a red/green $K_4$.  In the latter case, the blue $K_3$, along with any vertex from $Y$, forms a red/blue $K_4$.  If $|Y|=1$, then $|X|=k+1\ge 7$.  By Theorem \ref{thm44k}, $gr^3_2(K_4, K_4, K_{k})\le k+1$, for all $6\le k\le 9$, so the subgraph induced by $X$ contains a red/blue $K_4$, a red/green $K_4$, or a blue/green $K_k$.  In the second case, a red/green $K_4$, along with a vertex from $Y$, results in a red/green $K_5$.

Next, suppose that the edge in $\mathcal{B}$ is blue.  In order for every vertex to be incident with a red edge, each block must contain a red edge.  The vertices for two such edges induce a red/blue $K_4$. 

Finally, suppose that the edge in $\mathcal{B}$ is green.  By the pigeonhole principle, one block must contain at least $\ceil{\frac{k+2}{2}}\ge 4$ vertices.  Without loss of generality, suppose that the vertex sets for the blocks are denoted by $X$ and $Y$, and $|X|\ge 4$.  In order for every vertex to be incident with a red edge, each block must contain a red edge, so it follows that $|Y|\ge 2$.  If $X$ contains a red/green $K_3$, then that $K_3$, along with a red edge from $Y$, forms a red/green $K_5$.  So, assume that $X$ does not contain any red/green $K_3$-subgraphs.  If $X$ contains two disjoint red edges, then those edges must be joined by blue edges, forming a red/blue $K_4$.  If no two disjoint red edges exist in $X$, then in order for every vertex to be incident with a red edge, $X$ must contain a red $K_{1,3}$.   The leaves of such a $K_{1,3}$ can only be joined by blue edges, resulting in a red/blue $K_4$.

\underline{Subcase 2.2:} Suppose that $|V(\mathcal{B})|=4$.  If the edges in $\mathcal{B}$ are red and blue, then a red/blue $K_4$ can be formed by selecting a single vertex from each block.  If the edges in $\mathcal{B}$ are red and green, then no red or green edge exists in any of the blocks without producing a red/green $K_5$.   So, assume that all blocks form complete blue subgraphs.  By the minimality of $\mathcal{B}$, all vertices in $\mathcal{B}$ are incident with both red and green edges.  So, if any block contains three or more vertices, then that block, along with a single vertex from another red-adjacent block, forms a red/blue $K_4$.  Hence, we assume that each block has order at most $2$, from which it follows that $k+2\le 8$ (i.e., $k=6$).  As each block must then have order exactly two, the vertices in two red-adjacent blocks will form a red/blue $K_4$.

Assume that the edges in $\mathcal{B}$ are blue and green.  Since $\mathcal{B}$ is chosen to have minimal order, it follows that every vertex in $\mathcal{B}$ is incident with both blue and green edges.  By Therorem \ref{rededge}, every vertex must be incident with a red edge.  So, each block has order at least two and contains a red edge.  Selecting two red edges from distinct blocks that are joined by blue edges results in a red/blue $K_4$.

\underline{Subcase 2.3:} Suppose that $|V(\mathcal{B})|\in \{5, 6, 7, 8\}$.  Then the edges in $\mathcal{B}$ are blue and green.  Since $\mathcal{B}$ is chosen to have minimal order, it follows that every vertex in $\mathcal{B}$ is incident with both blue and green edges.  If a block only contains a single vertex, then that vertex is not incident with any red edges, contradicting Theorem \ref{rededge}.  So, each block has order at least two and contains a red edge.  Selecting two red edges from distinct blocks that are joined by blue edges results in a red/blue $K_4$.  

\underline{Case 3:} Suppose that $k=10$.  While the Gallai $3$-coloring of $K_{10}$ given in Figure \ref{r45k5} provides the lower bound $gr^3_2(K_4, K_5, K_{10})\ge 11$, it is not unique.  The Gallai $3$-coloring of $K_{10}$ shown in Figure \ref{r45{10}} avoids a red/blue $K_4$, a red/green $K_5$, and a blue/green $K_{10}$ (here, the block consisting of blue and green edges is a critical coloring for $r(K_4, K_3)$ in colors blue and green), and it sets us up for the lower bound construction we will use in the next case.
\begin{figure}[h!]
\centerline{
\includegraphics[width=0.6\textwidth]{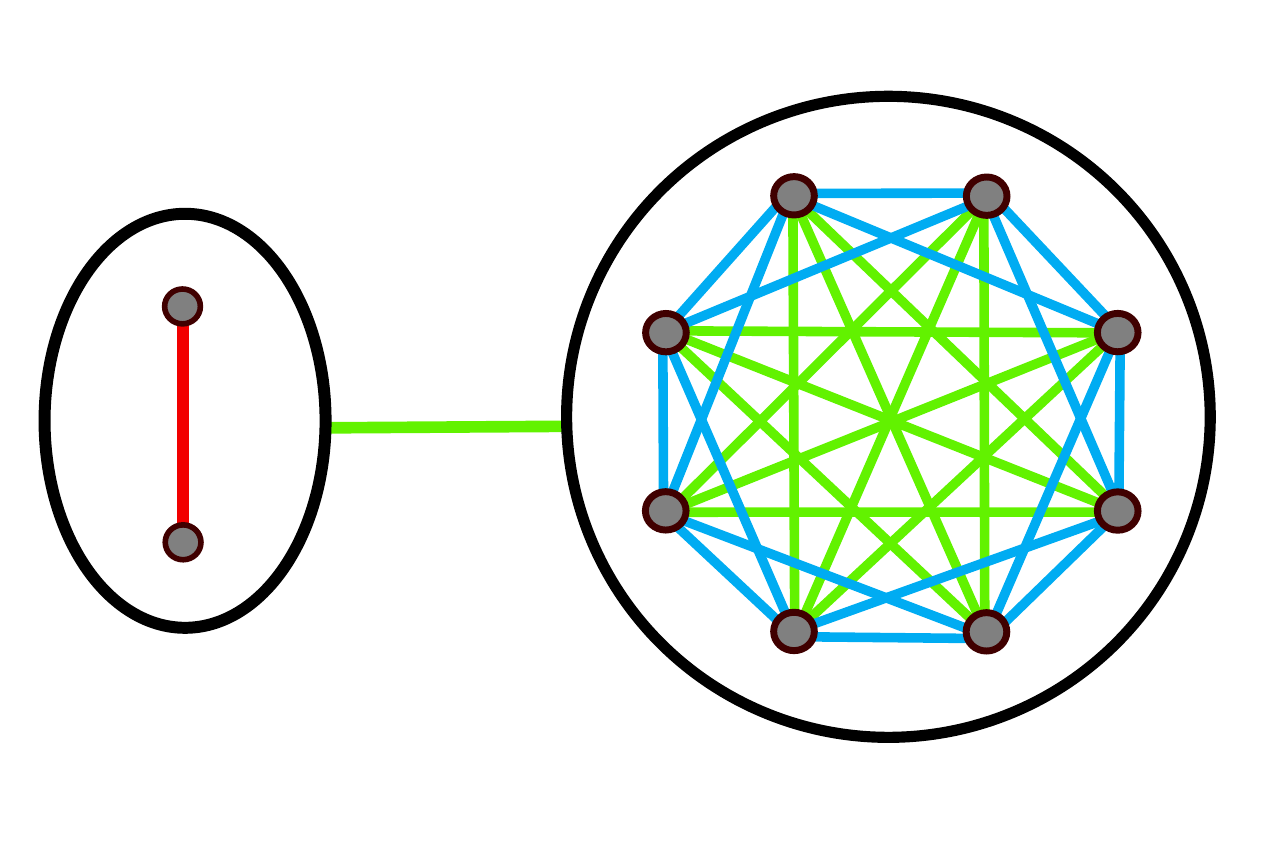}}
\caption{A critical coloring for $gr^3_2(K_4, K_5, K_{10})$.}\label{r45{10}}
\end{figure}

To prove that $gr^3_2(K_4, K_5, K_{10})\le 11$, consider a Gallai $3$-coloring of $K_{11}$ that avoids a red/blue $K_4$, a red/green $K_5$, and a blue/green $K_{10}$.  A red edge must exist, otherwise, it is a blue/green $K_{11}$.  Let $x$ be a vertex that is incident with a red edge.  Denote by $N_r(x)$, $N_b(x)$, and $N_g(x)$ the red, blue, and green neighborhoods of $x$, respectively.  It is possible that $N_b(x)$ or $N_g(x)$ could be empty, but $|N_r(x)|\ge 1$ by assumption.  Since rainbow triangles are avoided, all edges joining $N_r(x)$ and $N_b(x)$ are red or blue.  If either of $N_r(x)$ or $N_b(x)$ contain a red or blue edge, then that edge, along with $x$ and a single vertex from the other set, forms a red/blue $K_4$.  So, if $N_r(x)$ contains any edges, they must be green (the same is true for $N_b(x)$).  Also, avoiding a rainbow triangle forces all edges joining $N_r(x)$ and $N_g(x)$ to be red or green. Consider two subcases.

\underline{Subcase 3.1:} Suppose that $|N_r(x)|=1$ and let $y$ be the vertex in $N_r(x)$.  Since the subgraph induced by $N_b(x)$ only contains green edges, and a green $K_5$ is avoided, it follows that $|N_b(x)|\le 4$ and $|N_g(x)|\ge 5$.  Note that a red edge must exist in the subgraph induced by $N_g(x)$ (otherwise, the deletion of $y$ would result in a blue/green $K_{10}$).  Let $z_1z_2$ be a red edge such that $z_1,z_2\in N_g(x)$ and let $z_3,z_4,z_5\in N_g(x)$ be three other vertices.  If $z_iz_j$ is green for any $i\in \{1,2\}$ and $j\in \{3,4,5\}$, then the subgraph induced by $\{z_1,z_2,z_j\}$ is a red/green $K_3$ that, along with $x$ and $y$, forms a red/green $K_5$.  So, all edges if the form $z_iz_j$ are red or blue.  If any edge in the subgraph induced by $\{z_3,z_4,z_5\}$ is red or blue, then that edge, along with $z_1$ and $z_2$, forms a red/blue $K_4$.  Hence, the subgraph induced by $\{z_3,z_4, z_5\}$ is a green $K_3$ that, along with $x$ and $y$, forms a red/green $K_5$.

\underline{Subcase 3.2:} Suppose that $|N_r(x)|\ge 2$.  If the subgraph induced by $N_g(x)$ contains a red or green edge, then that edge, along with $x$ and a green edge in $N_r(x)$, forms a red/green $K_5$. So, $N_g(x)$ only contains blue edges and $|N_g(x)|\le 3$ in order to avoid a blue $K_4$.  Since $N_r(x)$ only contains green edges, if $|N_r(x)|\ge 3$, then selecting three vertices from $N_r(x)$, one vertex from $N_g(x)$, and vertex $x$, results in a red/green $K_5$.  It follows that $|N_r(x)|\le 2$, which implies that $|N_b(x)|\ge 5$.  Since $N_b(x)$ only contains green edges, it contains a green $K_5$.

In both subcases, we find that there exists a red/blue $K_4$, a red/green $K_5$, or a blue/green $K_{10}$.  It follows that $gr^3_2(K_4, K_5, K_{10})\le 11$.

\underline{Case 4:} Suppose that $11\le k\le 14$.  For the lower bound, we start with the construction given in Figure \ref{r45{10}}, and add in a green $K_{k-10}$, as shown in Figure \ref{constrcriticoloring}.  
\begin{figure}[h!]
\centerline{
\includegraphics[width=0.8\textwidth]{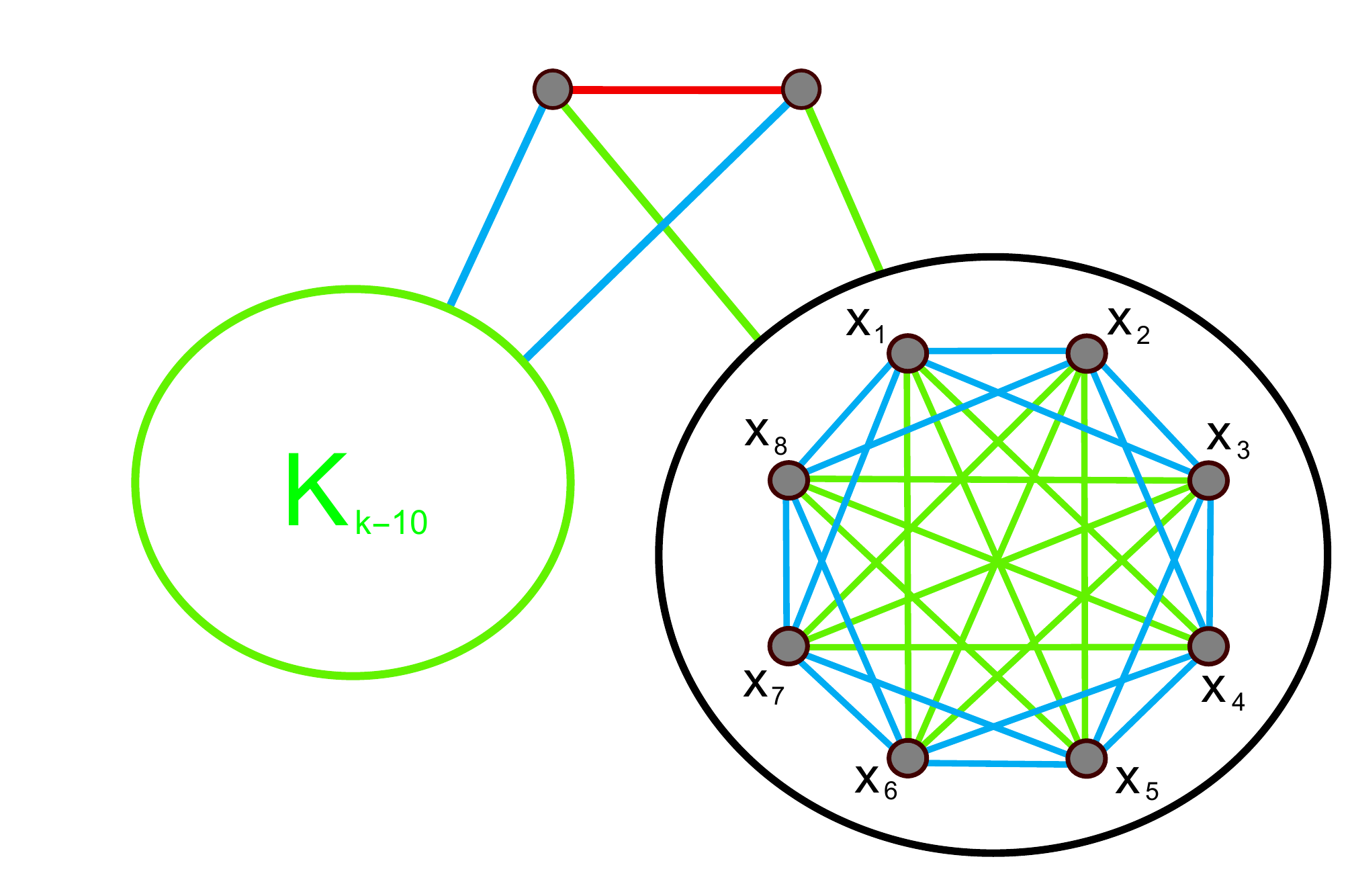}}
\caption{Constructing a critical coloring for $gr^3_2(K_4, K_5, K_{k})$, when $11\le k\le 14$.}\label{constrcriticoloring}
\end{figure}
Here, the subgraph induced by $\{x_1, x_2, \dots , x_8\}$ is a critical coloring for $r(K_4, K_3)$ (in colors blue and green).  Suppose that the vertex set for the green $K_{k-10}$-subgraph is given by $\{y_1, y
_2, \dots , y_{k-10}\}$, and the edges $x_iy_j$ are colored blue if $i$ and $j$ have the same parity and green if they have different parities.  The resulting Gallai $3$-coloring of $K_{k}$ avoids a red/blue $K_4$, a red/green $K_5$, and a blue/green $K_{k}$.  It follows that $gr^3_2(K_4, K_5, K_k)\ge k+1$.

To prove the reverse inequality, we use induction on $10\le k\le 14$, with Case 3 of this proof serving as the base case.  For $11\le k\le 14$, assume that $gr^3_2(K_4, K_5, K_{k-1})\le k$ and consider a Gallai $3$-coloring of $K_{k+1}$ that avoids a red/blue $K_4$ and a red/green $K_5$.  By the inductive hypothesis, there exists a blue/green $K_{k-1}$.  Note that $10\le k-1\le 13$, from which $r(K_3, K_4)=9$ implies that the blue/green $K_{k-1}$ contains a blue $K_3$ or a green $K_4$.  By Theorem \ref{rededge}, every vertex must be incident with at least one red edge.   If $\{x_1,x_2,x_3\}$ induces a blue $K_3$ and $yx_1$ is red, then the subgraph induced by $\{x_1,x_2,x_3,y\}$ is a red/blue $K_4$.  If $\{x_1,x_2,x_3,x_4\}$ induces a green $K_4$ and $yx_1$ is red, then the subgraph induced by $\{x_1,x_2,x_3,x_4,y\}$ is a red/green $K_5$.  It follows that $gr^3_2(K_4, K_5, K_k)\le k+1$.

\underline{Case 5:} Suppose that $15\le k\le 24$.  Since $r(K_4, K_5)=25$ \cite{GG}, there exists a blue/green $K_{k-1}$ that avoids a blue $K_4$ (and hence, a red/blue $K_4$) and a green $K_5$ (and hence, a red/green $K_5$).   As there are not enough vertices to form a blue/green $K_k$, it follows that $gr^3_2(K_4, K_5, K_k)\ge k$.  To prove the reverse inequality, consider a Gallai $3$-coloring of $K_k$ that avoids a red/blue $K_4$, a red/green $K_5$, and a blue/green $K_k$.   It cannot lack a red edge, or it would form a blue/green $K_k$.  So, let $x$ be a vertex that is incident with a red edge.  Let $N_r(x)$, $N_b(x)$, and $N_g(x)$ be the red, blue, and green neighborhoods of $x$, respectively.  If either of $N_r(x)$ or $N_b(x)$ contains a red or blue edge, then that edge, along with $x$ and a single vertex from the other neighborhood, forms a red/blue $K_4$.  So, both $N_r(x)$ and $N_b(x)$ induce complete green subgraphs.  

If both of $N_r(x)$ and $N_g(x)$ contain red or green edges, then those edges, along with $x$, form a red/green $K_5$.  So, either $N_g(x)$ induces a blue complete subgraph or $|N_r(x)|=1$.  If $N_g(x)$ only contains blue edges, then $|N_g(x)|\le 3$ (otherwise a blue $K_4$ exists).  Then $$|N_r(x)\cup N_b(x)|\ge k-4\ge 11.$$  By the pigeonhole principle, one of $N_r(x)$ and $N_b(x)$ contains a green $K_6$.  If $|N_r(x)|=1$, then $|N_b(x)|\le 4$ (since a green $K_5$ is avoided) implies that $$|N_g(x)|\ge k-6\ge 9.$$  Since, $r(K_3, K_4)=9$ \cite{GG}, it follows that the subgraph induced by $N_g(x)$ contains a red/green $K_3$ (which, along with $x$ and the vertex in $N_r(x)$, forms a red/green $K_5$) or a blue $K_4$.  
It follows that $gr^3_2(K_4, K_5, K_k)\le k$.
\end{proof}

\section{Conclusion}\label{con}

A natural extension of the work contained in this paper is to consider chromatic Gallai-Ramsey numbers of the form $gr^t_{t-1}(G_1, G_2, \dots , G_t)$, when $t\ge 4$.  Also, considering graphs other than complete graphs for the arguments leads to many new directions.  Also, the star-critical analogues of these numbers may also be considered.  Such numbers determine the number of edges that must be added to a critical coloring from a vertex so that the Ramsey property holds.  When the arguments are all complete graphs, the weakened star-critical chromatic Gallai-Ramsey number is $1$ less than the corresponding weakened chromatic Gallai-Ramsey number (cf., Theorem 3.8 of \cite{B2}).  So, the interesting cases will be those in which the arguments are not all complete graphs.

\

\noindent {\bf Acknowledgements.}  The first-listed author was funded by a Scholarly Development Assignment Program award from Western Carolina University.

\bibliographystyle{amsplain}



\end{document}